\newcommand{\showcomments}{yes}
\newsavebox{\commentbox}
\newcommand{\doublewidetilde}[1]{{%
  \mathpalette\double@widetilde{#1}%
}}
\newcommand{\double@widetilde}[2]{%
  \sbox\z@{$\m@th#1\widetilde{#2}$}%
  \ht\z@=.9\ht\z@
  \widetilde{\box\z@}%
}
\newcommand{\overbow}[1]{
   \tikz [baseline = (N.base), every node/.style={}] {
      \node [inner sep = 0pt] (N) {$#1$};
      \draw [line width = 0.4pt] plot [smooth, tension=1.3] coordinates {
         ($(N.north west) + (0.1ex,0)$)
         ($(N.north)      + (0,0.5ex)$)
         ($(N.north east) + (0,0)$)
      };
   }
}
\newtheorem{thm}{Theorem}[section]
\newtheorem{theorem}[thm]{Theorem}
\newtheorem{corollary}[thm]{Corollary}
\newtheorem{lemma}[thm]{Lemma}
\newtheorem{proposition}[thm]{Proposition}
\newtheorem*{theorem*}{Theorem}
\theoremstyle{definition}
\newtheorem{construction}[thm]{Construction} 
\newtheorem{definition}[thm]{Definition}
\theoremstyle{remark}
\newtheorem{conv}[thm]{Convention}
\newtheorem{remark}[thm]{Remark}
\newtheorem{example}[thm]{Example}
\newcommand{\nclose}[1]{\ensuremath{\langle\!\langle#1\rangle\!\rangle}}
\newcommand{\scname}[1]{\text{\sf #1}}
\newcommand{\area}{\scname{Area}}
\newcommand{\field}[1]{\mathbb{#1}}
\newcommand{\rationals}{\ensuremath{\field{Q}}}
\newcommand{\naturals}{\ensuremath{\field{N}}}
\title{Asphericity of cubical presentations: the 2-dimensional case}
\author{Macarena Arenas}
\address{DPMMS, Centre for Mathematical Sciences, Wilberforce Road, Cambridge, CB3 0WB, UK}
\email{mcr59@dpmms.cam.ac.uk}
\subjclass[2010]{20F06, 20F67}
\keywords{Small Cancellation, Cube Complexes, Asphericity}
\thanks{The author was supported by a Cambridge Trust \& Newnham College Scholarship, and by the Denman Baynes Junior Research Fellowship at Clare College, Cambridge.}
\begin{document}

\begin{abstract}
We show that under suitable hypotheses, the second homotopy group of the coned-off space associated to a $C(9)$ cubical presentation is trivial, and use this to provide classifying spaces for proper actions for the fundamental groups of many quotients of square complexes admitting such cubical presentations. When the cubical presentations satisfy a condition analogous to requiring that the relators in a group presentation are not proper powers, we conclude that the corresponding coned-off space is aspherical.
\end{abstract}

\maketitle

\section{Introduction}

The aim of this work is to explore the asphericity of certain cubical presentations $X^*=\langle X | \{Y_i \rightarrow X\} \rangle$ where $X$ is a compact, non-positively curved cube complex and $Y_i \rightarrow X$ are local isometries of compact non-positively curved cube complexes.
A finite cubical presentation is a natural generalisation of a finite group presentation $\mathcal{P}=\langle s_1, \ldots\, s_k| r_i, \ldots, r_\ell \rangle$.

It is a well-known result of Lyndon~\cite{Lyn66} that classical $C(6)$ presentations are aspherical if no relators are proper powers, and thus that the groups admitting such presentations have cohomological dimension at most equal to $2$. This was generalised to the setting of graphical $C'(\frac{1}{6})$ small-cancellation presentations by Gromov~\cite{Gromov2003} and Ollivier~\cite{Ollivier06}, and graphical $C(6)$ presentations by Gruber~\cite{Gruber15}, and to the setting of small-cancellation for rotation families of groups by Coulon~\cite{Cou11}. 
In what follows, we take the first steps towards a generalisation of these results to the setting of cubical small-cancellation theory, and show:

\begin{theorem}\label{thm:introA}
Let $X^*=\langle X | \{Y_i\}_{i \in I} \rangle$ be a minimal cubical presentation that satisfies the $C(9)$ condition. Let $\pi_1 X^*=\pi_1X/\langle \langle \{\pi_1Y_i\}_{i \in I}\rangle \rangle=:G$. If $dim(X)\leq 2$ and each $Y_i$ is homotopy equivalent to a graph, then $X^*$ is a $K(G,1)$, so $G$ is torsion-free and $gd(G)\leq 2$.
\end{theorem}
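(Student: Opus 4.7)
The plan is to deduce the theorem from the paper's main technical result, which --- as announced in the abstract --- states that $\pi_2(X^*) = 0$ for cubical presentations satisfying the hypotheses at hand (minimal, $C(9)$, $\dim(X) \leq 2$, $Y_i$ homotopy equivalent to graphs). Once this vanishing is in place, the rest of the theorem reduces to a standard low-dimensional asphericity argument.

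First I would show that $X^*$ has the homotopy type of a $2$-dimensional CW complex. Since $\dim(X) \leq 2$, it suffices to handle the cones $CY_i$: because $Y_i$ is homotopy equivalent to a graph (say, to a wedge of circles $W_i$), the attaching of $CY_i$ along $Y_i \to X$ is equivalent up to homotopy to attaching a collection of $2$-cells along loops representing generators of $\pi_1(Y_i)$. Replacing each $CY_i$ by this $2$-complex yields a $2$-dimensional CW complex $Z$ homotopy equivalent to $X^*$.

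Next, let $\widetilde{Z}$ denote the universal cover of $Z$. Then $\widetilde{Z}$ is a simply connected $2$-complex with $\pi_2(\widetilde{Z}) = \pi_2(Z) = \pi_2(X^*) = 0$ by the $\pi_2$-vanishing theorem. The Hurewicz theorem gives $H_2(\widetilde{Z}) \cong \pi_2(\widetilde{Z}) = 0$, so $\widetilde{Z}$ is acyclic; being simply connected, it is weakly contractible, and hence contractible by Whitehead. Therefore $X^* \simeq Z$ is a $K(G,1)$, from which $gd(G) \leq 2$ follows because $Z$ is $2$-dimensional. Torsion-freeness follows because any nontrivial finite subgroup of $G$ would force $G$ to have infinite cohomological dimension, contradicting the existence of a finite-dimensional classifying space.

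The only substantive work is establishing $\pi_2(X^*) = 0$, which is the paper's main technical contribution and hence the principal obstacle. I expect it to proceed via a disc-diagram analysis: a nontrivial spherical diagram in $X^*$ should descend to a reduced diagram in $X$ decorated with cone-cells corresponding to the $Y_i$, and the $C(9)$ hypothesis --- combined with minimality (the analogue of ``no relator is a proper power'') --- should rule out such a diagram through a curvature / Euler-characteristic argument refining the Lyndon and Gruber methods from the classical and graphical small-cancellation settings.
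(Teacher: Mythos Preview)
Your deduction is correct and follows the same overall strategy as the paper: invoke the vanishing of $\pi_2$ and then bootstrap to contractibility of the universal cover via Hurewicz. There is a small but genuine difference in packaging. The paper's technical theorem (Theorem~\ref{thm:cub2}) is stated not for $X^*$ but for the \emph{reduced space} $\bar X^*$, a quotient of $\widetilde{X^*}$ obtained by collapsing cones over elevations with the same image; minimality is precisely what guarantees $\bar X^* = \widetilde{X^*}$, and hence $\pi_2(X^*)=\pi_2(\bar X^*)=0$. You absorb this step into your black box, which is fine, but you should be aware that this is the one place minimality enters. For the homological step, the paper keeps the (possibly $3$-dimensional) cell structure of $\bar X^*$ and uses Mayer--Vietoris on the decomposition $\hat X \cup \mathbf{C(Y)}$ to kill $H_3$, whereas you first replace $X^*$ by a homotopy-equivalent $2$-complex and then observe that a simply connected $2$-complex with vanishing $\pi_2$ is contractible. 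Your route is slightly slicker here and avoids the long exact sequence; the paper's route has the advantage of working directly with the coned-off space and generalising more transparently to higher-dimensional $Y_i$ (as the author indicates is treated in forthcoming work). Your final paragraph on how $\pi_2=0$ is established is correct in spirit: the argument is diagrammatic, though it proceeds via the shell structure and Diagram Dichotomy (Theorem~\ref{thm:tric}) rather than a curvature count.
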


The above hypothesised minimality is the cubical analogue of requiring, in the classical setting, that none of the relators are proper powers, and is necessary to avoid torsion. See Definition~\ref{def:minimal} below.

Replacing minimality with the weaker hypothesis that $X^*=\langle X | \{Y_i\}_{i \in I} \rangle$ be symmetric in the sense of Definition~\ref{def:symmetric}, we obtain instead classifying spaces for proper actions for $\pi_1 X^*$. These spaces arise in connection to the Baum-Connes conjecture, and thus this is part of the motivation for finding good models for $\underbar EG$.

\begin{theorem}\label{thm:introB}
Let $X^*=\langle X | \{Y_i\}^k_{i=1} \rangle$ be a symmetric cubical presentation that satisfies the $C(9)$ condition. Let $\pi_1 X^*=\pi_1X/\langle \langle \{\pi_1Y_i\}^k_{i=1}\rangle \rangle=:G$. If  $dim(X)\leq 2$ and each $Y_i$ is homotopy equivalent to a graph, then there is a quotient $\bar X^*$ of $\widetilde{X^*}$ that is an $\underbar EG$, so $cd_\rationals (G) \leq dim(\bar X^*) \leq 2$.  If, in addition, $X$ has a finite regular cover where each $Y_i \rightarrow X$ lifts to an embedding, then $vcd(G) \leq 2$.
\end{theorem}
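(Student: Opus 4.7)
My plan is to adapt the proof of Theorem \ref{thm:introA} via a controlled quotient of $\widetilde{X^*}$ that converts the cone redundancies arising from non-minimality into honest finite stabilizers. The symmetric hypothesis should provide, for each $Y_i$, a finite group $H_i$ of automorphisms through which the attaching map $Y_i \to X$ factors. Lifting to $\widetilde{X^*}$, each cone inherits an action of a conjugate of some $H_i$, and the proposed space $\bar X^*$ is the cone-wise quotient. The residual action of $G$ on $\bar X^*$ is then proper by construction: cell stabilizers are trivial on the bulk $\widetilde{X}$ and conjugate to the $H_i$ on cone points of type $i$.

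The crux is to establish that $\bar X^*$ is contractible. I would run the disk-diagram and spherical-diagram scheme from Theorem \ref{thm:introA} in parallel: any hypothetical essential spherical diagram in $\bar X^*$ would lift through the quotient map to a diagram in $\widetilde{X^*}$, where the $C(9)$ small-cancellation condition, the hypothesis $\dim(X) \leq 2$, and the graph-homotopy-type hypothesis on each $Y_i$ combine to force a combinatorial reduction. The graph hypothesis also ensures $\dim(\bar X^*) \leq 2$, since cones on graphs contribute only two-dimensional cells. Combining contractibility, properness, and the dimension bound yields $\bar X^* = \underbar EG$. The rational cohomological dimension bound $cd_\rationals(G) \leq 2$ then follows from the general principle that a proper action of $G$ on a contractible complex of dimension $n$ gives $cd_\rationals(G) \leq n$, since finite groups have trivial rational cohomology in positive degrees.

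For the $vcd$ statement, the hypothesised finite regular cover $\hat X \to X$ with each $Y_i$ lifting to an embedding produces a lifted cubical presentation $\hat X^* = \langle \hat X \mid \{\hat Y_{i,j}\}\rangle$ in which every cone is embedded; this should translate to the minimality hypothesis of Theorem \ref{thm:introA}, since self-symmetry of a cone cannot occur when the cone is embedded in the ambient complex. Applying Theorem \ref{thm:introA} to $\hat X^*$, I obtain a $K(H,1)$ of dimension at most $2$, where $H$ is the finite-index subgroup of $G$ corresponding to $\pi_1 \hat X^*$. In particular $cd(H) \leq 2$, whence $vcd(G) \leq 2$.

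The step I expect to be the main obstacle is verifying contractibility of $\bar X^*$: although the construction of $\bar X^*$ is formally a quotient by cone symmetries, one must rule out new essential spheres potentially introduced when distinct cone copies in $\widetilde{X^*}$ are identified, and in particular address spherical diagrams concentrated near fixed points of the $H_i$-actions. This requires a careful adaptation of the $C(9)$ disk-diagram machinery used in Theorem \ref{thm:introA} to the quotient setting, keeping track of how the curvature estimates behave on cells lying in the fixed-point loci.
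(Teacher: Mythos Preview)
Your outline captures the construction of $\bar X^*$ and the finite-stabilizer verification correctly, but there are two genuine gaps.

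\textbf{The fixed-point analysis is missing.} To conclude that $\bar X^*$ is an $\underline{E}G$ you need, for every finite subgroup $H\leq G$, that the fixed set $(\bar X^*)^H$ is contractible --- in particular nonempty. You only argue properness (finite cell stabilisers) and contractibility of the total space, and neither of these implies anything about fixed sets. This is in fact the most substantial part of the paper's argument: one first shows that the collection $\{Stab_{\pi_1X^*}(Y_i)\}$ is malnormal in $G$, using the $C(9)$ condition together with a uniform bound on piece-length, and deduces that any nontrivial subgroup fixes at most one cone-vertex. One then shows that every finite subgroup fixes a point, via a Burnside-type orbit count combined with the fact that a finite group cannot act freely on a finite-dimensional aspherical complex. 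Without this, you have a contractible $G$-complex with finite stabilisers, but not a model for $\underline{E}G$.

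\textbf{Lifting spheres to $\widetilde{X^*}$ does not work.} Your plan for contractibility is to lift a spherical diagram from $\bar X^*$ to $\widetilde{X^*}$ and reduce there. But $\pi_2\widetilde{X^*}$ is typically nonzero when the presentation is not minimal: two distinct cones in $\widetilde{X^*}$ over the same image in $\hat X$ form an essential sphere. The entire purpose of passing to $\bar X^*$ is to collapse these; your concern about ``new essential spheres introduced by the quotient'' has the direction backwards. The paper instead regards $\bar X^*$ itself as a (infinite) cubical presentation satisfying $C(9)$, runs the spherical-diagram argument directly there to obtain $\pi_2\bar X^*=0$, and then uses Mayer--Vietoris and Hurewicz (with the graph hypothesis on the $Y_i$ and $\dim X\leq 2$) to promote this to full contractibility.

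Your $vcd$ argument, by contrast, is a legitimate alternative to the paper's. The paper shows virtual torsion-freeness directly (torsion elements are detected by cones, and cones unwrap in the cover), then uses $vcd(G)\leq\dim\underline{E}G$. Your route --- passing to the lifted presentation over $\hat X$, observing that embedded cones force $Aut_{\hat X}(\hat Y_{i,j})=1$ and hence minimality, and invoking Theorem~\ref{thm:introA} --- also works, once you check that $C(9)$ persists for the lifted presentation (it does: elevations to $\widetilde{\hat X}=\widetilde X$ coincide with those for $X^*$, and no new pieces appear) and that $\pi_1\hat X^*$ really is the expected finite-index subgroup of $G$.
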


Since some of the cubically presented groups satisfying the hypotheses above have torsion, Theorem~\ref{thm:introB} is the best statement possible in this generality. 

Theorems~\ref{thm:introA} and~\ref{thm:introB} will follow from Theorem~\ref{thm:cub2}, which applies to cubical presentations of arbitrary dimension and is of independent interest.

Assuming minimality or symmetry, and under the hypotheses on the dimension of $X$ and the $Y_i$'s stated above, Theorems~\ref{thm:introA} and~\ref{thm:introB} answer a question posed by Wise in~\cite[4.5]{WiseIsraelHierarchy}. 
The general case of the theorem, where the dimensions of $X$ and  $Y_i$ are arbitrary, and which we use to derive near-sharp bounds on the (virtual) cohomological dimension of $X^*$,  will be treated in forthcoming work~\cite{Arenas2023pi}.

Classical and graphical small-cancellation groups are hyperbolic as soon as they satisfy either the  $C'(\frac{1}{6})$ or the $C(7)$ conditions, and the groups produced in \cite{Cou11} are also hyperbolic. Thus, our main theorem is particularly interesting in that hyperbolicity is neither assumed nor deduced: while one can conclude hyperbolicity for a sufficiently good cubical small cancellation presentation if one assumes hyperbolicity of $\pi_1X$, it is nevertheless the case that non-hyperbolic, and even non-relatively-hyperbolic groups can satisfy strong cubical small-cancellation conditions.
Thus, Theorems~\ref{thm:introA} and~\ref{thm:introB} apply to a large and varied family of quotients of cubulated groups. 

Despite their indisputable value as a source of examples of interesting group-theoretic behaviour, the classical and graphical small-cancellation theories are limited in their applicability in that they can only serve to understand ``nice'' quotients of free groups; cubical small-cancellation theory transfers some of the complexity arising from the relators in a group presentation to the ``cubical generator'' in the cubical presentation -- that is, the cube complex $X$ that is being quotiented -- making it possible to exploit the tools of cubical geometry to prove more general theorems. 

This approach has already proven fruitful in many instances: particularly, it plays a key role in Agol's celebrated proofs of the Virtual Haken and Virtual Fibred Conjectures~\cite{AgolGrovesManning2012, Agol08}, which build on work of Wise~\cite{WiseIsraelHierarchy} and his collaborators~\cite{BergeronWiseBoundary, HaglundWiseAmalgams, HsuWiseCubulatingMalnormal}; it is used by Arzhantseva and Hagen in~\cite{ArHag2021} to show that many groups that arise as quotients of cubulated groups are acylindrically hyperbolic; it is used by Jankiewicz and Wise in~\cite{jankiewicz2017cubulating} to construct fundamental groups of compact nonpositively curved cube complexes that do not virtually split, and it is used by the author in~\cite{Arenas2023} to produce a version of the Rips construction that provides cocompactly cubulated hyperbolic groups of arbitrarily large finite cohomological dimension, many of which algebraically fibre.

\subsection{Structure of the paper}
In Section~\ref{sec:asphericitycube}, as a sort of base-case for our main theorem, we reprove without resorting to CAT(0) geometry, the well-known result that CAT(0) cube complexes are contractible.
In Section~\ref{sec:back} we  present the necessary background on cubical small cancellation theory, diagrams in cell complexes, and asphericity. 
 In Section~\ref{sec:main} we give the proof of our main technical result, Theorem~\ref{thm:cub2}, and use it to deduce Theorems~\ref{thm:introA} and~\ref{thm:introB}. 
 Finally, in Section~\ref{sec:ex} we outline some examples to which our theorem applies.

\subsection*{Acknowledgements} I am grateful to my PhD supervisor, Henry Wilton, and to Jack Button, Mark Hagen, and the anonymous referee  
 for all their comments and suggestions, which greatly improved the quality of this work.

\section{On the asphericity of non-positively curved cube complexes}\label{sec:asphericitycube}

We will assume that the reader is familiar with the basic background on cube complexes. Particularly with the definitions of \emph{non-positively curved cube complex}, \emph{local isometry}, \emph{midcube}, \emph{hyperplane}, and \emph{hyperplane carrier}. These can be consulted in~\cite{Sageev95, GGTbook14, Arenas2023thesis}, for instance.

A \emph{CAT(0) cube complex} is a simply connected, non-positively curved cube complex. CAT(0) cube complexes are homotopy equivalent to CAT(0) spaces in the metric sense of Cartan--Alexandrov--Toponogov~\cite{BridsonHaefliger, Leary_KanThurston}, and are therefore contractible, which in turn implies that non-positively curved cube complexes are aspherical spaces. While the CAT(0) metric is sometimes useful, in practice one can prove most results about non-positively curved cube complexes without resorting to the CAT(0) metric. Because of this, and since non-positively curved cube complexes correspond to cubical presentations having no relators, and as such, satisfy the $C(9)$ condition in the sense of Definition~\ref{def:Cn}, it is therefore worthwhile to investigate whether the asphericity of non-positively curved cube complexes (or equivalently, the contractibility of their universal covers) can be proved via purely cubical/combinatorial means. This is the purpose of the present section. We stress that alternative proofs of the contractibility of CAT(0) cube complexes, while not present in the literature as far as the author is aware, are well-known to experts.

\begin{theorem}\label{thm:npcaspherical}
Non-positively curved cube complexes are aspherical.
\end{theorem}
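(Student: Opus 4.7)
The plan is to prove the contractibility of the universal cover $\widetilde{X}$ by purely combinatorial means, so that $X = \widetilde{X}/\pi_1(X)$ is aspherical. Since $\widetilde{X}$ is simply connected by construction, it suffices to exhibit $\widetilde{X}$ as an increasing union of contractible subcomplexes, which then forces all higher homotopy groups to vanish.

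Concretely, I would fix a basepoint $v_0 \in \widetilde{X}^{(0)}$ and, for each $n \geq 0$, let $K_n \subseteq \widetilde{X}$ be the subcomplex consisting of all closed cubes each of whose vertices lies at combinatorial distance at most $n$ from $v_0$. Then $K_0 = \{v_0\}$ and $\widetilde{X} = \bigcup_{n \geq 0} K_n$, so it suffices to show by induction on $n$ that each $K_n$ is contractible. The inductive step is carried out cube-by-cube: for each cube $C$ of $K_n$ not contained in $K_{n-1}$, the set of vertices of $C$ at minimum distance from $v_0$ is shown to span a subcube $F_C$ of $C$, the \emph{gate} of $v_0$ into $C$. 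The straight-line retraction $C \to F_C$ then assembles, compatibly along shared faces, into a deformation retraction $K_n \to K_{n-1}$, completing the induction.

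The main obstacle is the gate lemma: proving from the link (flag) condition alone that the nearest-vertex set in any cube is the vertex set of a subcube. I would approach this via the combinatorial statement that the distance from $v_0$ to a vertex $w$ equals the number of hyperplanes of $\widetilde{X}$ separating them, established using cubical disc-diagram techniques (each dual curve of a minimal disc diagram crosses a distinct hyperplane, and distinct dual curves hit distinct hyperplanes). Given this, the vertices of $C$ at minimum distance from $v_0$ are precisely those lying on the $v_0$-side of every hyperplane through $C$ that separates $v_0$ from some vertex of $C$; two such hyperplanes cannot be transverse-obstructed, since by the flagness of the vertex links any pair of hyperplanes dual to a common cube mutually cross. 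The nearest-vertex set is therefore an intersection of halfspaces meeting $C$ in a subcube, as needed, and the argument nowhere appeals to the CAT(0) metric. An alternative route, more aligned with the disc-diagram machinery built up later in the paper, is to show instead that any cubical spherical diagram $S \to \widetilde{X}$ admits a reduction procedure (by cancelling square-bigons along shared hyperplane dual curves) terminating in a trivial diagram, with the innermost dual curve analysis again relying on the link condition; this proves vanishing of the higher $\pi_n$ directly but is more intricate in dimensions $\geq 3$.
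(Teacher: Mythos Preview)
Your approach is genuinely different from the paper's. The paper proves that \emph{finite} CAT(0) cube complexes are collapsible, arguing by induction on dimension: a hyperplane $H$ is a lower-dimensional CAT(0) cube complex, so has a free face by induction, and this yields a free face of the ambient complex; one then shows that collapsing the entire carrier $N(H)$ down to a single edge (guided by a collapsing sequence for $H$) leaves a CAT(0) subcomplex, and iterates. The infinite case is handled separately by trapping the image of any $S^n$ inside the convex hull of a finite subcomplex. Your filtration by combinatorial balls is closer to the normal-cube-path/combing viewpoint and handles the infinite case in one stroke; the paper's route, on the other hand, yields the stronger conclusion of collapsibility in the finite case.

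There is, however, a genuine gap in your retraction step. In a CAT(0) cube complex the gate $F_C$ is always a \emph{single vertex}: every hyperplane through $C$ has $v_0$ on one side, and the unique vertex of $C$ lying on the $v_0$-side of all of them is the closest one. So your ``subcube'' claim is true but vacuous, and---more seriously---the straight-line retractions $C\to F_C$ do \emph{not} assemble along shared faces. Take $C$ a single square with $v_0$ at one corner and $C'$ the opposite edge: then $F_C=v_0$ while $F_{C'}$ is an endpoint of $C'$, and the two linear collapses disagree on $C'$. The repair is to work at the other end: each cube $C$ of $K_n$ not in $K_{n-1}$ has a unique \emph{furthest} vertex $w_C$ (the only vertex at distance exactly $n$), and one pushes $C$ away from $w_C$ onto the union of its facets missing $w_C$, all of which lie in $K_{n-1}$. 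These maps do glue, since any shared face either misses both furthest vertices (and is fixed by both retractions) or contains a vertex at distance $n$, forcing $w_{C}=w_{C'}$. Equivalently, one shows that the descending link of each vertex at distance $n$ is a full simplex in the (flag) link, so that the closed star of such a vertex in $K_n$ is a single cube. This descending-link statement---not the gate statement---is where the disc-diagram/hyperplane argument you outline actually does its work: two neighbours $u_1,u_2$ of $w$ at distance $n-1$ give dual curves emanating from adjacent boundary edges of a minimal disc diagram between two geodesics $v_0\to w$, and these curves must cross, producing (after hexagon moves) a square on $w,u_1,u_2$.
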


\begin{definition}
Let $X$ be a CW complex. Suppose that $c, c'$ are two cubes of $X$ such that $c' \subset c$, 
$c$  is a maximal face of $X$, and no other maximal face of $X$ contains $c'$.
Then $c'$ is  a \emph{free face} of $X$.
A  \emph{collapse} is the removal of all open cells $c''$  such that $ c' \subseteq c \subseteq c'' $ , where $c'$  is a free face.
A cell complex is \emph{collapsible} if there is a sequence of collapses leading to a point.
\end{definition}

All collapsible cell-complexes are contractible~\cite{Whitehead39}, but the converse is not necessarily true, even in the setting of cube complexes: Bing's \emph{house with two rooms}~\cite[pg.~4]{Hatcher2002} is an example of a contractible cube complex that isn't collapsible (and not non-positively curved).

In what follows we will show that finite CAT(0) cube complexes are collapsible. From this, we will be able to deduce that all CAT(0) cube complexes are contractible via a nested subcomplex argument. Note that collapsibility of finite CAT(0) cube complexes follows from more general work of Adiprasito and Benedetti~\cite{AdiprasitoBenedetti20}. However, their proof heavily relies on CAT(0) geometry.

Lurking behind our proof is a form of convexity associated to the $\ell^1$ metric on the $1$-skeleton of a CAT(0) cube complex $X$. We define it below in a manner that elucidates its combinatorial nature.

\begin{definition}[Combinatorial convexity]\label{def:combconv}
A connected subcomplex $Y$ of  a CAT(0) cube complex $X$ is \emph{convex} if, for each $n$-cube $c$  with $n \geq 2$ in  $X$, whenever a corner of $c$ lies in $Y$, then the cube $c$ lies in $Y$.
The \emph{convex hull} $Hull(Z)$ of a subcomplex $Z \subset X$ is the smallest convex subcomplex of $X$ containing $Z$. 
\end{definition}

A consequence of Definition~\ref{def:combconv} and Greendlinger's Lemma~\cite{Greendlinger60} -- a general form of which is stated as Theorem~\ref{thm:tric} below -- is the following observation.

\begin{remark}\label{rmk:convexcat}
A convex subcomplex $Y$ of  a CAT(0) cube complex $X$ is itself a CAT(0) cube complex. 
\end{remark}

The result below can be readily derived from~\cite[2.28]{HaglundGraphProduct}.

\begin{theorem}\label{thm:finitehull}
The convex hull of a finite subcomplex of a CAT(0) cube complex is finite.
\end{theorem}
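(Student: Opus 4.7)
The plan is to characterize $Hull(Z)$ via the hyperplanes of $X$ and bound its vertex count by a hyperplane-counting argument. The structural facts I would rely on are standard for CAT(0) cube complexes and contained in (or immediate from) \cite[2.28]{HaglundGraphProduct}: (i) each hyperplane $H$ partitions $X$ into two combinatorial half-spaces $H^+, H^-$, each of which is convex in the sense of Definition~\ref{def:combconv}; (ii) the intersection of any family of convex subcomplexes is convex; and (iii) any vertex $v$ of $X$ is uniquely determined by the set of hyperplanes separating $v$ from a fixed basepoint.

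First I would use (i) and (ii) to identify $Hull(Z)$ with the intersection of all half-spaces of $X$ containing $Z$: this intersection is convex, contains $Z$, and is contained in every convex subcomplex containing $Z$. I would then prove the following dichotomy for each hyperplane $H$: either $H$ separates two vertices of $Z$, or $Z$ lies entirely in a single half-space of $H$, in which case $Hull(Z)$ does as well. Consequently, the hyperplanes of $X$ that meet $Hull(Z)$ in an edge-dual are exactly those that separate some pair of vertices of $Z$.

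Now suppose $Z$ is finite. The number of hyperplanes separating two vertices of $Z$ is at most $\sum_{z,z' \in V(Z)} d(z,z')$, where $d$ denotes combinatorial distance in the $1$-skeleton, and this sum is finite; call this finite family $\mathcal{H}$, and let $M = |\mathcal{H}|$. Fix $z_0 \in V(Z)$. For any vertex $v \in Hull(Z)$, every hyperplane separating $v$ from $z_0$ crosses $Hull(Z)$ and therefore lies in $\mathcal{H}$, and by (iii) the vertex $v$ is determined by this subset of $\mathcal{H}$. Hence $Hull(Z)$ has at most $2^M$ vertices. Because a cube in a CAT(0) cube complex is determined by its vertex set, finitely many vertices forces finitely many cubes, which completes the argument.

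The main obstacle I anticipate is verifying that the combinatorial convexity of Definition~\ref{def:combconv} genuinely coincides with the hyperplane/half-space notion invoked above, i.e.\ that combinatorial half-spaces satisfy the corner-completion axiom and that no larger class of subcomplexes needs to be considered. This matching is the content of \cite[2.28]{HaglundGraphProduct} together with Remark~\ref{rmk:convexcat}; once granted, the remainder is a direct hyperplane count.
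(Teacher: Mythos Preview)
Your proposal is correct and is exactly the standard derivation the paper has in mind: the paper does not supply its own argument for this theorem but simply states that it ``can be readily derived from \cite[2.28]{HaglundGraphProduct}'', and your half-space/hyperplane-counting sketch is precisely that derivation. There is nothing further to compare.
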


We also make use of the following proposition, which can be found in~\cite{Sageev95}.

\begin{proposition}\label{prop:hereditarycat} A hyperplane in a CAT(0) cube complex is a CAT(0) cube complex, and separates it into exactly two connected components.
\end{proposition}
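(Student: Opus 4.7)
The plan is to verify three claims in turn: first, that $H$ carries a natural cube complex structure that is non-positively curved; second, that $H$ separates $X$ into two components; and third, that $H$ is simply connected. Throughout I will freely use that combinatorial edge-loops in $X$ bound square disk diagrams because $\pi_1 X = 1$, deferring the formal setup to Section~\ref{sec:back}.

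For the non-positive curvature of $H$, I realize $H$ as the cube complex whose $n$-cells are the midcubes of the $(n+1)$-cubes of $X$ lying in the $\sim$-equivalence class that defines $H$, with faces inherited in the obvious way. To check the link condition, fix a vertex $w$ of $H$: it is the midpoint of a unique edge $e$ of $X$, and $\mathrm{lk}(w, H)$ is naturally identified with a full subcomplex of $\mathrm{lk}(v, X)$ for either endpoint $v$ of $e$ — namely the subcomplex spanned by the vertices corresponding to edges parallel (through a square at $v$) to $e$. Since full subcomplexes of flag complexes are flag and $X$ is NPC, $H$ inherits NPC.

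For the separation claim, I would introduce the combinatorial hyperplane carrier $N(H) \subset X$ (the union of closed cubes meeting $H$) and first observe that it is convex in the sense of Definition~\ref{def:combconv}: any cube with a corner in $N(H)$ contains a midcube in the parallelism class defining $H$. For a combinatorial edge-loop $\gamma$ in $X^{(1)}$, define $n_H(\gamma) \in \mathbb{Z}/2$ to be the count modulo $2$ of edges of $\gamma$ dual to $H$. Choose a disk diagram $D \to X$ with $\partial D = \gamma$; each dual curve of $D$ mapping into $H$ has two distinct endpoints pinned on edges of $\partial D$ that are themselves dual to $H$, so those edges pair up and $n_H(\gamma) \equiv 0 \pmod 2$. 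Thus $n_H$ descends to a $\mathbb{Z}/2$-valued invariant of edge-paths depending only on endpoints, splitting the vertices of $X$ into at most two parity classes; since the endpoints of any edge dual to $H$ lie in different classes, $X \setminus H$ has exactly two components.

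For the simple connectedness of $H$, I leverage the two-sidedness just obtained to identify $N(H)$ combinatorially with $H \times [-\tfrac{1}{2}, \tfrac{1}{2}]$, exhibiting $H$ as a retract of $N(H)$. Since $N(H)$ is convex in $X$, Remark~\ref{rmk:convexcat} gives that $N(H)$ is itself a CAT(0) cube complex, in particular simply connected; the retraction then forces $\pi_1 H = 1$, and combined with the link analysis above we conclude that $H$ is CAT(0). The main obstacle is the pairing of dual-curve endpoints on $\partial D$, which depends on a careful treatment of square disk diagrams and their dual curves (to be developed in Section~\ref{sec:back}); a secondary subtlety is that the product decomposition $N(H) \cong H \times [-\tfrac{1}{2}, \tfrac{1}{2}]$ requires the two-sidedness supplied by the parity argument, so the steps above must be executed in the order given.
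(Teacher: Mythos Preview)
The paper does not prove this proposition itself; it cites \cite{Sageev95} and remarks that the proof there ``only utilise[s] combinatorial convexity, disc diagrams, and the fact that CAT(0) cube complexes are simply connected.'' Your proposal is the standard Sageev-style argument and uses exactly these three ingredients---the link computation for non-positive curvature of $H$, a disc-diagram parity count for separation, and convexity of $N(H)$ together with Remark~\ref{rmk:convexcat} for simple connectedness---so it aligns with what the paper invokes and introduces no circularity with Theorem~\ref{thm:npcaspherical}.

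Two small points. In the parity step, dual curves that happen to be closed loops contribute nothing to the boundary count and are harmless; the fact that the remaining dual curves have two boundary endpoints (no monogons) is supplied by Lemma~\ref{lem:discpatho}. Your description of $\mathrm{lk}(w,H)$ is correct in content but the word ``parallel'' is misleading: the relevant edges at $v$ are those spanning a square with $e$, i.e.\ the neighbours of $[e]$ in $\mathrm{lk}(v,X)$, so that $\mathrm{lk}(w,H)\cong \mathrm{lk}\bigl([e],\mathrm{lk}(v,X)\bigr)$, which is flag because links in flag complexes are flag.
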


\begin{remark} As we are aiming to reprove a basic fact about CAT(0) cube complexes,  some care must be taken to ensure that our arguments are not circular. That is, that none of the result we utilise hinge on the contractibility of CAT(0) cube complexes to begin with. In the case of Theorem~\ref{thm:finitehull} and Proposition~\ref{prop:hereditarycat} above, the proofs only utilise combinatorial convexity, disc diagrams, and the fact that CAT(0) cube complexes are simply connected.
\end{remark}
 
\begin{proof}[Proof of Theorem~\ref{thm:npcaspherical} via collapsibility of finite CAT(0) cube complexes] 
\

\textbf{Finite CAT(0) cube complexes:} We first show, by induction on dimension, that a finite CAT(0) cube complex is either a 0-dimensional cube or has a free face. A connected, 0-dimensional cube complex is necessarily a single vertex; a finite, 1-dimensional CAT(0) cube complex is a finite tree, and therefore has a free face (a leaf). Assume that the result holds for all CAT(0) cube complexes of dimension $< k$.  Let $Y$ be a  $k$-dimensional CAT(0) cube complex, and let $H$ be a hyperplane in $Y$. Then $H$ has dimension less than $k$ and is also a  CAT(0) cube complex, so $H$ has a free face $f_H$. Note that $f_H$ lies in a face $f_Y$ of $Y$, and $f_Y$ cannot be glued to any other cubes of $Y$, as this would imply that $f_H$ extends to a midcube in any such cube. Hence, $f_Y$  is a free face of $Y$.

Now we show, by induction on dimension, that for any CAT(0) cube complex $Y$ and for any free face $f$ of $Y$, there is a sequence of collapses starting with $f$ that results in a subcomplex $Y'$ that is again a CAT(0) cube complex. Note that this claim implies that $Y$ is collapsible, since $Y'$ must have a free face and then the process can be repeated, and terminates in finitely many steps because $Y$ was finite to begin with.  The base case is $dim (Y)=1$, which is immediate as $Y$ is then a tree, so every connected subcomplex of $Y$ is a tree and thus CAT(0).
Now assume that the claim holds for all CAT(0) cube complexes of dimension $< k$ and let $dim (Y)=k$.

 Let $f$ be a free face of $Y$ and $F$ be the maximal face containing $f$, let $H$ be a hyperplane of $Y$ having a free face that is a midcube $m$ of $f$ and note that $m$ is a free face of $H$. Since $dim (H) < dim (Y)$ and $H$ is a CAT(0) cube complex, the induction hypothesis implies that there is a sequence of collapses starting with $m$ that results in a CAT(0) cube complex $H'$, moreover, after repeating this process finitely many times, we may assume that the sequence of collapses terminates at a point $p$ that is a vertex of $H$. We claim that this sequence of collapses extends to a sequence of collapses for the carrier $N(H)$ of $H$ in $Y$ which terminates with an edge $e$ of $Y$.
 
 Indeed, each collapse of $H$  results in a subcomplex that has a free face $m'$, so the corresponding collapse in $Y$ results in a subcomplex that has a free face (the face having $m'$ as a midcube). Thus, the sequence of collapses terminating on $p$ extends to a sequence of collapses terminating in a subcomplex of $Y$ in which $N(H)$ collapses to $N(p)$, which is an edge $e$. Now, since $Y$ is CAT(0), then the interior $N(H)^o$ of $N(H)$ separates $Y$ by Proposition~\ref{prop:hereditarycat}, and each connected component $Z_1,Z_2$  of $Y-N(H)^o$ is CAT(0): each $Z_i$ is non-positively curved since for each vertex $v$ of $Y-N(H)^o$, the link of $v$ in $Y-N(H)^o$ is the restriction $link(v)|_{Y-N(H)^o}$, and is either identical to  $link(v)$, or is obtained from $link(v)$ by removing the flag subcomplex corresponding to $link(v)|_{N(H)^o}$ from it; each $Z_i$ is simply connected because $Y=Z_1 \cup N(H) \cup Z_2$  is simply connected, and is homotopy equivalent to the wedge $Z_1 \vee Z_2$.

Finally, note that $Y'$ is obtained from $Y-N(H)^o$ by reattaching $e$ to it along $(Y-N(H)^o) \cap e$, so it now suffices to check that $link(v)$ is flag for each vertex $v$ of $(Y-N(H)^o) \cap e$ in $Y'$, that is, for the endpoints of $e$. But the links of these vertices are flag in $Y-N(H)^o$, and attaching $e$ only adds a disconnected vertex to each of these links, so the flag condition continues to hold, and $Y'$ is simply connected because both connected components of $Y-N(H)^o$ are, thus $Y'$ is homotopy equivalent to the wedge of $2$ simply connected cube complexes.
 
\textbf{Infinite CAT(0) cube complexes:} 
As $S^n$ is compact, any map $S^n \longrightarrow Y$ for $n \geq 1$ has compact image $Im_Y(S^n)$, so $Im_Y(S^n)$ is contained in a finite subcomplex $Z\subset Y$. Thus,  $Im_Y(S^n) \subset Hull(Z)$, which is finite by Theorem~\ref{thm:finitehull} and Remark~\ref{rmk:convexcat}, and contractible by the argument above. Hence, $S^n \longrightarrow Y$ is nullhomotopic, and $\pi_n(Y)=0$ for all $n\geq 1$.
\end{proof}

\section{Background}\label{sec:back}

\subsection{Cubical small-cancellation  theory}\label{ssec:csm}

Unless otherwise noted, all definitions and results concerning cubical small-cancellation theory recounted in Subsections~\ref{ssec:csm} and~\ref{ssec:diagcub} originate in~\cite{WiseIsraelHierarchy}. 

We begin this section by introducing our main objects of study:

\begin{definition}[Cubical presentation]\label{def:cubpres}
A \textit{cubical presentation} $\langle X|\{Y_i\}  \rangle$ consists of a connected non-positively curved cube complex $X$ together with a collection of local isometries of connected non-positively curved cube complexes $Y_i \overset{\varphi_i} \longrightarrow X$. Local isometries of non-positively curved cube complexes are $\pi_1$-injective, so it makes sense to define the \emph{fundamental group of a cubical presentation} as $\pi_1 X/\nclose{\{\pi_1 Y_i\}}$. By the Seifert-Van Kampen Theorem, this group is isomorphic to the fundamental group of the space $X^*$ obtained by coning off each $Y_i$ in $X$.  
Hereinafter, all auxiliary definitions and results about cubical presentations are in practice statements about their associated coned-off spaces.
\end{definition}

\begin{remark}
A group presentation $\langle a_1,  \ldots, a_s | r_1, \ldots, r_m \rangle$  can be interpreted cubically  by letting $X$ be a bouquet of \textit{s} circles and letting each $Y_i$ map to the path determined by $r_i$. On the other extreme, for every non-positively curved cube complex $X$ there is a ``free" cubical presentation $X^*=\langle X| \ \rangle$ with fundamental group $\pi_1X=\pi_1X^*$.
\end{remark}

\begin{definition}[Elevations] Let $Y \rightarrow X$ be a map where $Y$ is connected, and let $\hat X \rightarrow X$ be a covering map. An \emph{elevation} $\hat Y \rightarrow \hat X$ is a map satisfying
\begin{enumerate}
\item $\hat Y$ is connected,
\item $\hat{Y} \rightarrow Y$ is a covering map, the composition $\hat{Y} \rightarrow Y \rightarrow X$ equals $\hat{Y} \rightarrow \hat X \rightarrow X$, and
\item assuming all maps involved are basepoint preserving, $\pi_1 \hat{Y}$ equals the preimage of $\pi_1 \hat{X}$ in $\pi_1 Y$.
\end{enumerate}
\end{definition}

\begin{conv}\label{conv:elevate} Later in this text we will be dealing exclusively with two kinds of elevations: elevations of a map $Y \rightarrow X$ to the universal cover $\widetilde X \rightarrow X$ and elevations of $Y \rightarrow X$ to a covering space $\hat X \rightarrow X$ that arises as a subspace of $\widetilde{X^*}$.
\begin{enumerate}
\item  Elevations of the first type will be denoted $\widetilde Y \rightarrow X$, since $\widetilde Y$ is indeed a copy of the universal cover of $Y$ in $\widetilde X$. At times, we will distinguish various elevations using the action of $\pi_1X$ on $\widetilde X$. That is,  we choose a base elevation $\widetilde Y$ and  tag a translate $g\widetilde Y$ by the corresponding element $g \in \pi_1X$.
\item Elevations of the second type will be denoted $ Y \rightarrow X$. This choice of notation might seem confusing to the reader now, but it will follow from Theorem~\ref{thm:embeds} that under appropriate small-cancellation conditions --also introduced below, in Definition~\ref{def:Cn}--  elevations $Y \rightarrow X$ to $\hat X \rightarrow X$ are embeddings. As in the case of elevations to $\widetilde X$, we might distinguish various elevations to $\hat X$ using the action of $\pi_1X^*$ on $\widetilde{X^*}$. 
\end{enumerate}
\end{conv}

\begin{definition}[Pieces]\label{def:pieces}
Let $\langle X | \{Y_i\} \rangle$ be a cubical presentation.
An \emph{abstract contiguous cone-piece} $P$ of $Y_j$ in $Y_i$ is an intersection $\widetilde{Y}_j \cap \widetilde{Y}_i$ where $\widetilde{Y}_j, \widetilde{Y}_i$ are fixed elevations to $\widetilde X$ and either $i \neq j$ or where $i = j$
but $\widetilde{Y}_j \neq \widetilde{Y}_i$. Each abstract contiguous cone-piece $P$ induces a map $P \rightarrow Y_i$ which is the composition $P \hookrightarrow \widetilde {Y}_i \rightarrow Y_i$, and a \emph{cone-piece} of $Y_j$ in $Y_i$ is a combinatorial path $p \rightarrow P$ in an abstract contiguous cone-piece of $Y_j$ in $Y_i$.

An \emph{abstract contiguous wall-piece} $P$ of $Y_i$ is an intersection $N(H) \cap \widetilde{Y}_i$ where $\widetilde{Y}_i$ is a fixed elevation and $N(H)$ is
the carrier of a hyperplane $H$ that is disjoint from $\widetilde{Y}_i$. To avoid having to deal with empty pieces, we shall assume that $H$ is dual to an edge with an endpoint on $\widetilde{Y}_i$. As in the case of cone-pieces, each abstract contiguous wall-piece $P$ induces a map $P \rightarrow Y_i$, and a \emph{wall-piece} of $Y_i$ is a combinatorial path $p \rightarrow P$ in an abstract contiguous wall-piece of $Y_i$.

A \emph{piece} is either a cone-piece or a wall-piece.
\end{definition}

\begin{remark} In Definition~\ref{def:pieces}, two elevations of a cone $Y$ are considered identical if they differ by an element of $Stab_{\pi_1X}(\widetilde Y)$. This is in keeping with the conventions of classical small cancellation theory, where overlaps between a relator and any of its cyclic permutations are not regarded as pieces.  
\end{remark}

\begin{definition}
Let $Y \rightarrow X$ be a local isometry. $Aut_X(Y)$ is the group of combinatorial automorphisms $\psi: Y \rightarrow Y$ such that the diagram below is commutative:
\[\begin{tikzcd}
Y \arrow[r, "\psi"] \arrow[rd] & Y \arrow[d] \\
                               & X          
\end{tikzcd}\]
If $Y$ is simply connected, then $Aut_X(Y)$ is equal to $Stab_{\pi_1X}(Y)$. In general, $Aut_X(Y)\cong (N_{Aut_X(\widetilde Y)}\pi_1Y)/\pi_1Y$, where $N_G(H)$ is the normaliser of $H$ in $G$. 

See~\cite{ArHag2021} for a detailed discussion.
\end{definition}

\begin{conv} Let $X^*=\langle X | \{Y_i\}_I \rangle$. For convenience we assume, as is standard to assume in this framework~\cite[3.3]{WiseIsraelHierarchy}, that $\pi_1Y_i$ is normal in $Stab_{\pi_1X}(\widetilde Y_i)$ for each $i \in I$. Throughout, we will assume also that $X$ is finite dimensional and locally finite, and that each $Y_i$ is connected.
\end{conv}

For the purposes of this paper, we will need to impose further restrictions on a cubical presentation; these are described below.

\begin{definition}\label{def:symmetric} 
 A local-isometry $Y  \rightarrow X$ with $Y$ connected and superconvex is symmetric if for each component $K$ of the fibre product $Y\otimes_{_X} Y$, either $K$ maps isomorphically to
each copy of $Y$, or $[\pi_1Y : \pi_1K]=\infty$. A cubical presentation $\langle X | \{Y_i\} \rangle$ is \emph{symmetric} if each $Y_i \rightarrow X$ is symmetric.
\end{definition}

Superconvexity is defined in~\cite[2.35]{WiseIsraelHierarchy}, and also in~\cite[2.4]{Arenas2023}. We refrain from stating the definition here, since we will only use it implicitly when applying Lemma~\ref{lem:symm} in Section~\ref{sec:main}.

\begin{definition}(Commensurator)
The \emph{commensurator} $C_G(H)$ of a subgroup $H$ of $G$ is the subgroup
 $C_G(H)=\{g \in G: [H :H^g \cap H] < \infty \text{ and } [H^g :H^g \cap H] < \infty  \}$.
\end{definition}

In~\cite[8.12]{WiseIsraelHierarchy}, the following is observed:

\begin{lemma}\label{lem:symm} Let $X^*=\langle X | \{Y_i\} \rangle$ be a cubical presentation. Then $X^*$ is symmetric if and only if $C_{\pi_1X}(\pi_1 Y_i)=Stab_{\pi_1X}(\widetilde Y_i)$, $[Stab_{\pi_1X}(\widetilde Y_i):\pi_1Y_i]< \infty$, and $\pi_1Y_i \vartriangleleft Stab_{\pi_1X}(\widetilde Y_i)$ for each  $Y_i \rightarrow X$.
\end{lemma}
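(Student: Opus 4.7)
My plan is to translate the symmetry condition on $Y\otimes_X Y$ (writing $Y=Y_i$ and $T=Stab_{\pi_1 X}(\widetilde Y)$) into the three listed algebraic conditions via a standard parametrization of its components, then exploit the superconvexity of $\widetilde Y$ (built into the definition of ``symmetric'') to detect coincidence of elevations from commensurability of stabilizers. Concretely, the components of $Y\otimes_X Y$ are in bijection with those double cosets $[g]\in \pi_1 Y\backslash\pi_1 X/\pi_1 Y$ for which $g\widetilde Y\cap\widetilde Y\neq\emptyset$; the corresponding component $K_{[g]}$ has fundamental group (conjugate in $\pi_1 X$ to) $\pi_1 Y\cap g\pi_1 Y g^{-1}$, and its two projections to $Y$ are coverings of degrees $[\pi_1 Y:\pi_1 Y\cap g\pi_1 Y g^{-1}]$ and $[g\pi_1 Y g^{-1}:\pi_1 Y\cap g\pi_1 Y g^{-1}]$ respectively. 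Hence $K_{[g]}$ maps isomorphically to each copy of $Y$ exactly when $g\in N_{\pi_1 X}(\pi_1 Y)$, and $[\pi_1 Y:\pi_1 K_{[g]}]<\infty$ precisely when the first of these indices is finite.

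For $(\Leftarrow)$ I take $g$ representing a component. If $g\in T$, condition (3) gives $g\pi_1 Y g^{-1}=\pi_1 Y$, so $K_{[g]}$ maps isomorphically on each side. If $g\notin T$, condition (1) says $g\notin C_{\pi_1 X}(\pi_1 Y)$, forcing at least one of the two commensurator-indices to be infinite. The delicate case is when only the second index is infinite: to rule this out I invoke superconvexity. If the first index were finite, then $H:=\pi_1 Y\cap g\pi_1 Y g^{-1}$ would act cocompactly on $\widetilde Y$ and, via its inclusion into $g\pi_1 Y g^{-1}$, also on $g\widetilde Y$; the two $H$-invariant superconvex subcomplexes would lie at bounded Hausdorff distance, so superconvexity would force $g\widetilde Y=\widetilde Y$, contradicting $g\notin T$. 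Hence the first index is infinite and $[\pi_1 Y:\pi_1 K_{[g]}]=\infty$.

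For $(\Rightarrow)$ I invoke compactness: under the standing assumptions $Y\otimes_X Y$ is a compact cube complex with finitely many compact components. For $g\in T$ the elevations $\widetilde Y$ and $g\widetilde Y$ coincide, so the first projection $K_{[g]}\to Y$ is a surjective local isometry whose lift to universal covers is the identity; compactness of $K_{[g]}$ then makes this a finite cover, so $[\pi_1 Y:\pi_1 K_{[g]}]<\infty$ and the symmetry hypothesis selects the other clause of Definition~\ref{def:symmetric}, yielding $g\in N_{\pi_1 X}(\pi_1 Y)$. This proves (3); the finiteness of the set of components indexed by cosets of $\pi_1 Y$ in $T$ gives (2). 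For (1), $T\subseteq C_{\pi_1 X}(\pi_1 Y)$ is immediate from (3), and the reverse inclusion follows by exactly the same superconvexity rigidity as in $(\Leftarrow)$: any $g\in C_{\pi_1 X}(\pi_1 Y)$ yields a subgroup $\pi_1 Y\cap g\pi_1 Y g^{-1}$ of finite index in both factors, cocompact on $\widetilde Y$ and $g\widetilde Y$ simultaneously, whence superconvexity forces these subcomplexes to coincide.

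The principal obstacle is this superconvexity rigidity step, used on both sides of the equivalence: translating cocompactness of a common stabilizer on two convex subcomplexes into bounded Hausdorff distance and thence into literal equality of the subcomplexes. Everything else is routine bookkeeping via the double-coset parametrization of the components of the fibre product.
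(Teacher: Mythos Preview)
The paper does not supply a proof of this lemma; it is simply attributed to \cite[8.12]{WiseIsraelHierarchy}. Your proposal therefore goes beyond what the paper offers, and the double-coset parametrisation of the components of $Y\otimes_X Y$ together with a superconvexity rigidity step is indeed the correct framework for an argument.

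That said, there is a genuine gap in your $(\Leftarrow)$ direction. In the ``delicate case'' where $g\notin T$, $[\pi_1 Y:H]<\infty$, and $[g\pi_1 Y g^{-1}:H]=\infty$, you assert that $H$ acts cocompactly on $g\widetilde Y$ ``via its inclusion into $g\pi_1 Y g^{-1}$''. Inclusion alone does not give cocompactness: that would require $[g\pi_1 Y g^{-1}:H]<\infty$, which is exactly what you have assumed fails. Consequently you have not established bounded Hausdorff distance between $\widetilde Y$ and $g\widetilde Y$, and the contradiction does not follow as written. What you \emph{do} get from $[\pi_1 Y:H]<\infty$, $H\le Stab(g\widetilde Y)$, and $\widetilde Y\cap g\widetilde Y\neq\emptyset$ is only the one-sided coarse containment $\widetilde Y\subset N_R(g\widetilde Y)$; turning this into $g\widetilde Y=\widetilde Y$ requires a sharper use of superconvexity than the symmetric Hausdorff-distance argument you sketch. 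This is precisely the ``principal obstacle'' you flag in your final paragraph, so it needs to be carried out rather than asserted. Note that this issue does not arise in your treatment of condition~(1) in the $(\Rightarrow)$ direction, since there both commensurator indices are finite by hypothesis and the bounded-Hausdorff-distance claim is legitimate.
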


To the best of our knowledge, the following notion was introduced in~\cite{ArHag2021}, where it is used in the course of proving acylindrical hyperbolicity for certain cubical presentations satisfying strong cubical small-cancellation conditions.

\begin{definition}\label{def:minimal}  A cubical presentation $\langle X | \{Y_i\}_I \rangle$ is \emph{minimal} if the following holds for each $i \in I$: let $\widetilde Y_i \rightarrow Y_i$ be the universal cover, and let $\widetilde{Y_i} \rightarrow Y_i \rightarrow X$ be an elevation of $Y_i \rightarrow X$. Fix a basepoint $x_0$ in $X$. Then $Stab_{\pi_1(X, x_0)}(\widetilde{Y_i})=\pi_1 (Y_i, y_{i_0})$.
\end{definition}

Minimality generalises prohibiting relators that are proper powers in the classical small-cancellation case. In our setting, minimality is used to avoid the ``obvious'' torsion that could be created in a quotient $\pi_1X/\langle \langle \{\pi_1 Y'_i\} \rangle \rangle$ if the  $Y'_i \rightarrow X$  are themselves non-trivial finite covers $Y'_i \rightarrow Y_i \rightarrow X$. For instance, if $Y$ is a finite degree covering of $X$, the cubical presentation $X^*=\langle X | Y \rangle$ will have finite fundamental group, even if it satisfies the $C(9)$ condition defined below (or any other small cancellation condition).

%
%

\begin{definition}\label{def:Cn}
A cubical presentation $X^*$ satisfies the \textit{$C(n)$ small-cancellation condition} if no essential closed path $\sigma \rightarrow Y_i$ is the concatenation of fewer than $n$ pieces.
\end{definition}

An analogue of the $C'(\frac{1}{n})$ condition can also be defined in this setting, but we won't need it in the present work. We remark only that, as in the classical case, the cubical $C'(\frac{1}{n})$ condition implies the cubical $C(n+1)$ condition, but the implication is very much not reversible: regardless of the choices of $n$ and $n'$, the cubical $C(n)$ condition does not necessarily imply the cubical $C'(\frac{1}{n'})$ condition.

\subsection{Diagrams in cube complexes and cubical presentations}\label{ssec:diagcub}

In this section we introduce disc and spherical diagrams, and the general related jargon -- the analysis of the possible diagrammatic behaviours in  $C(9)$ cubical presentations, and the use of the tools already available in this context, will be the main ingredients utilised in Section~\ref{sec:main}.

\begin{definition}
A map $f: X \longrightarrow Y$ between 2-complexes is \emph{combinatorial} if it maps cells to cells of the same dimension.
A complex is \emph{combinatorial} if all attaching maps are combinatorial, possibly after subdividing the cells. 
\end{definition}

\begin{definition}[Disc diagram] A \emph{disc diagram} $D$ is a compact contractible combinatorial 2-complex, together with an embedding $D \hookrightarrow S^2$. The \emph{boundary path} $\partial D$ is the attaching map of the 2-cell at infinity. A \emph{disc diagram in a complex X} is a combinatorial map $D \rightarrow X$.
A \emph{square disc diagram} is a disc diagram that is also a cube complex (though \textbf{not necessarily} non-positively curved!). Note that any disc diagram in a cube complex is a square disc diagram. 
\end{definition}

We introduce some of the phenomena that may arise in square-disc diagrams;  unlike in the general setting of $2$-complexes, problematic behaviour of diagrams can be described and classified precisely in the cubical setting.

\begin{definition}[Square-disc behaviours]\label{def:squarebee}
 A \emph{dual curve} in a square disc diagram is a path that is a concatenation of midcubes. The 1-cells crossed by a dual curve are \emph{dual} to it.
 A \emph{bigon} is a pair of dual curves that cross at their first and last midcubes. A \emph{monogon} is a single dual curve that crosses itself at its first and last midcubes. A \emph{nonogon} is a single dual curve of length $\geq1$ that starts and ends on the same dual 1-cell, thus it corresponds to an immersed cycle of midcubes. A \emph{spur} is a vertex of degree $1$ on $\partial D$.

A \emph{corner} in a diagram $D$ is a vertex $v$ that is an endpoint of consecutive edges $a,b$ on $\partial D$ lying in a square $s$.
 A \emph{cornsquare} -- short for ``generalised corner of a square''-- consists of a square $c$ and dual curves $p, q$ emanating from consecutive edges $a, b$ of $c$ that terminate on consecutive edges $a',b'$ of $\partial D$. The \emph{outerpath} of the cornsquare is the path $a'b'$ on $\partial D$. 
A \emph{cancellable pair} in $D$ is a pair of $2$-cells $R_1, R_2$ meeting along a path $e$ such that the following diagram commutes:
 
 \[\begin{tikzcd}
                         & e \arrow[ld] \arrow[rd] &                          \\
\partial R_1 \arrow[rd] \arrow[rr]  &                         & \partial R_2 \arrow[ld] \\
                         & X                       &                         
\end{tikzcd}\]

 A cancellable pair leads to a smaller area disc diagram via the following procedure: cut out $e\cup Int(R_1)\cup Int(R_2)$ and then glue together the paths $\partial R_1-e$ and $\partial R_2-e$ to obtain a diagram $D'$ with $\area(D')=\area(D)-2$ and $\partial D'=\partial D$.
\end{definition}

By performing the procedure just described to cancellable pairs, diagrams in non-positively curved cube complexes can often be simplified to avoid certain pathologies.

\begin{lemma}\label{lem:discpatho}\cite[2.3+2.4]{WiseIsraelHierarchy}	Let $D \rightarrow X$ be a disc diagram in a non-positively curved cube complex. If $D$ contains a bigon or a nonogon, then there is a new diagram $D'$ having the same boundary path as $D$, so $\partial D' \rightarrow X$ equals $\partial D \rightarrow X$, and such that $Area(D')\leq Area(D)-2$.
Moreover,  no disc diagram in $X$ contains a monogon, and if $D$ has minimal area among all diagrams with boundary path $\partial D$, then $D$ cannot contain a bigon nor a nonogon.
\end{lemma}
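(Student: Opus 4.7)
The plan is to treat each of the three assertions separately, in each case reducing the problem via an innermost argument to a cancellable pair in the sense of Definition~\ref{def:squarebee}, then invoking the cut-and-reglue procedure to reduce area by $2$. The underlying combinatorial input is that hyperplanes in non-positively curved cube complexes are two-sided and embedded, which can be established purely from the flag condition on vertex links, without appeal to CAT(0) geometry.

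To rule out monogons, I would argue directly: a monogon dual curve in $D$ would project to a self-crossing of a single hyperplane $H \subset X$ at one edge. The flag condition at the relevant vertex of $X$ would then force the two ambient squares of $D$ to be identified in $X$, contradicting the fact that a monogon has a dual curve of positive length that returns to its starting midcube. This step is fully local and does not require any innermost reduction or induction on area.

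For the bigon case, I would pass to an innermost bigon $B$ bounded by dual curves $c_1, c_2$ crossing at squares $s, s' \in D$, where innermost means $B$ contains no smaller bigon, monogon, or nonogon. After this reduction, the interior of $B$ has the structure of a ladder of squares with $c_1$ on one side and $c_2$ on the other. A sequence of hexagon-moves along the ladder (each area-preserving and boundary-preserving) should shorten it until $s$ and $s'$ become adjacent along an edge $e$; at that point the parallelism of edges dual to $c_1$ and to $c_2$, together with the NPC condition, will identify the remaining boundary edges of $s$ and $s'$ in $X$, exhibiting $(s,s')$ as a cancellable pair. The cancellation procedure of Definition~\ref{def:squarebee} then yields $D'$ with $\partial D' = \partial D$ and $\area(D') = \area(D) - 2$. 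The nonogon case I would handle analogously: the nonogon dual curve cuts out a subdisc whose boundary edges all lie in one hyperplane carrier, and an innermost such nonogon admits a ladder decomposition producing a cancellable pair by the same mechanism.

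The ``moreover'' clause then follows formally: if $D$ is area-minimal among diagrams with boundary $\partial D$, any bigon or nonogon would produce a diagram $D'$ of strictly smaller area with $\partial D' = \partial D$, contradicting minimality; the monogon assertion is unconditional on $D$. The hard part will be the ladder analysis in the bigon/nonogon cases — specifically, verifying that after the hexagon-moves terminate, the end-squares $s, s'$ really do form a cancellable pair in $X$, since this is where the full combinatorial strength of the flag condition is used and where the argument genuinely departs from the general $2$-complex setting, in which such pathologies need not be removable.
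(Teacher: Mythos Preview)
The paper does not supply a proof of this lemma; it is quoted from \cite[2.3+2.4]{WiseIsraelHierarchy} without argument, so there is no in-paper proof to compare against. Your outline for the bigon and nonogon cases --- pass to an innermost instance, shuffle via hexagon moves until the two end-squares are adjacent, then verify that they form a cancellable pair using the flag condition on links in $X$ --- is precisely the standard argument from that reference, and the deduction of the ``moreover'' clause is indeed formal.

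Your monogon argument, however, has a genuine gap. You write that a monogon in $D$ ``would project to a self-crossing of a single hyperplane $H \subset X$'', but this conflates the diagram with its target: the map $D \to X$ is typically highly non-injective, so a self-crossing dual curve in $D$ tells you nothing about self-intersection of any hyperplane in $X$ (indeed, hyperplanes in $\widetilde X$ are always embedded, monogon or no). Nor does the flag condition ``force the two ambient squares of $D$ to be identified in $X$'' in any direct sense. The actual argument is, like the others, an innermost one carried out in $D$: take a monogon enclosing a region of minimal area and analyse the inner corner vertex of the self-crossing square; the flag condition in $X$ applied to the image of its link then produces either a hexagon move reducing the enclosed region or a cancellable pair. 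It is not the one-step local check you describe, and in particular it does use the same innermost machinery as the bigon and nonogon cases.
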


Some additional terminology is necessary when describing diagrams in cubical presentations:

\begin{definition}
Recall that the coned-off space $X^*$ introduced in Definition~\ref{def:cubpres} consists of $X$ with a cone on $Y_i$ attached to $X$ for each $i$. The vertices of the cones on $Y_i$'s are the \emph{cone-vertices} of $X^*$. The cellular structure of $X^*$ consists of all the original cubes of $X$, and the pyramids over cubes in $Y_i$ with a cone-vertex for the apex. Let $D \rightarrow X^*$ be a disc diagram in a cubical presentation. The vertices in $D$ which are mapped to the cone-vertices of $X^*$ are the \emph{cone-vertices} of $D$. 
Triangles in $D$ are naturally grouped into cyclic families meeting around a cone-vertex.
Each such family forms a subspace of $D$ that is a cone on its bounding cycle. A  \emph{cone-cell} of $D$ is a cone that arises in this way.
To simplify the theory, when analysing diagrams in a cubical presentation we ``forget'' the subdivided cell-structure of a cone-cell $C$ and regard it simply as a single $2$-cell. 
\end{definition}

A situation that may occur with diagrams in coned-off spaces is that two cone-cells might come from the same coned-off relation $Y$. When this happens, it is often possible to fuse these two cone-cells together into a single cone-cell, as explained below. This simplification does not arise in the ``purely cubical" setting, but will be useful for analysing diagrams in cubical presentations.

\begin{definition}A pair of cone cells $C,C'$ in $D$ is \emph{combinable} if they  map to the same cone $Y$ of $X^*$ and $\partial C$ and $\partial C'$ both pass through a vertex $v$ of $D$, and  map to closed paths
at the same point of $Y$ when regarding $v$ as their basepoint.

As the name suggests, such a pair can be combined to simplify the diagram by replacing the pair with a single cone-cell mapping to $Y$ and whose boundary is the concatenation $\partial C \partial C'$.
\end{definition}

\begin{definition}\label{def:reduced} A \emph{disc diagram} $D\rightarrow X^*$ is \emph{reduced} if the following conditions hold:
\begin{enumerate}
\item \label{it:r1} There is no bigon in a square subdiagram of $D$.
\item \label{it:r2} There is no cornsquare whose outerpath lies on a cone-cell of $D$. 
\item \label{it:r3} There does not exist a cancellable pair of squares. 
\item \label{it:r4} There is no square $s$ in $D$ with an edge on a cone-cell $C$ mapping to the
cone $Y$, such that $(C\cup s)\rightarrow X$ factors as $(C\cup s) \rightarrow Y \rightarrow X$.
\item \label{it:r5} For each internal cone-cell $C$ of $D$ mapping to a cone $Y$, the path $\partial C$ is essential in $Y$.
\item \label{it:r6} There does not exist a pair of combinable cone-cells in $D$.
\end{enumerate}
\end{definition}

A closely-related notion to that of disc diagram reducibility is that of complexity:

\begin{definition}\label{def:mincomp}
The \emph{complexity} $Comp(D)$ of a disc diagram $D\rightarrow X^*$ is the ordered pair $(\#\text{Cone-cells}, \#\text{Squares})$.
We order the pairs lexicographically: namely $(\#C,\#S) < (\#C',\#S')$ whenever $\#C<\#C'$ or $\#C=\#C'$ and $\#S < \#S'$.
A disc diagram $D\rightarrow X^*$ has \emph{minimal complexity} if no disc diagram $D'\rightarrow X^*$ having $\partial D =\partial D'$ has $Comp(D')< Comp(D)$.
\end{definition}

Since the objective of this work is to understand the second homotopy group of a cubical presentation, it will come as no surprise that in addition to disc diagrams, their spherical analogues will also play a key role in this endeavour.

\begin{definition}[Spherical diagram] A \emph{spherical diagram} $\Sigma$ is a compact simply-connected combinatorial 2-complex, together with a homeomorphism $\Sigma \hookrightarrow S^2$. A \emph{spherical diagram in a complex X} is a combinatorial map $\Sigma \rightarrow X$.
We define \emph{reduced} spherical diagrams and \emph{complexity} in the same way as in Definitions~\ref{def:reduced} and~\ref{def:mincomp}, replacing all instances of ``disc diagram'' by ``spherical diagram''. A spherical diagram has \emph{minimal complexity} if no spherical diagram $\Sigma'\rightarrow X^*$ homotopic to $\Sigma$ has $Comp(\Sigma')< Comp(\Sigma)$.
\end{definition}

All diagrams in this paper are either square disc or square spherical diagrams, or are disc or spherical diagrams in the coned-off space associated to a cubical presentation.

\begin{remark}\label{rmk:mincompisreduced}
If a disc diagram or spherical diagram has minimal complexity, then it is also reduced, as any of the pathologies in Definition~\ref{def:reduced} would indicate a possible complexity reduction. For instance, a cornsquare whose outerpath lies on a cone-cell can be absorbed, after a sequence of hexagon moves -- each of which corresponds to pushing a hexagon on one side of a $3$-cube to obtain the hexagon on the other side -- into said cone-cell; a cancellable pair of squares can be cancelled as described in Definition~\ref{def:squarebee}; a square bigon leads to a cancellable pair of squares after a sequence of hexagon moves; and a pair of combinable cone-cells in $D$ can be combined to reduce the number of cone-cells by $1$.  A detailed case-by-case analysis of all possible reductions can be found in~\cite[3.e]{WiseIsraelHierarchy}. 
\end{remark}

\begin{definition}[Shell]
A \emph{shell} of $D$ is a 2-cell $C \rightarrow D$
whose boundary path $\partial C \rightarrow D$ is a concatenation $QP_1 \cdots P_k$ for some $k \leq 4$
where $Q$ is a boundary arc in $D$ and $P_1, \ldots , P_k$ are non-trivial pieces in the interior of $D$.
The arc $Q$ is the \emph{outerpath} of $C$ and the concatenation $S:=P_1 \cdots P_k$ is the \emph{innerpath} of $C$.  
\end{definition}

\begin{remark}
Note that if a cubical presentation $X^*$ satisfies the $C(n)$ condition and $D$ is a minimal complexity diagram, then the outerpath of a shell in $D$ is the concatenation of $\geq n-4$ pieces.
\end{remark}

\begin{figure}
\centerline{\includegraphics[scale=0.23]{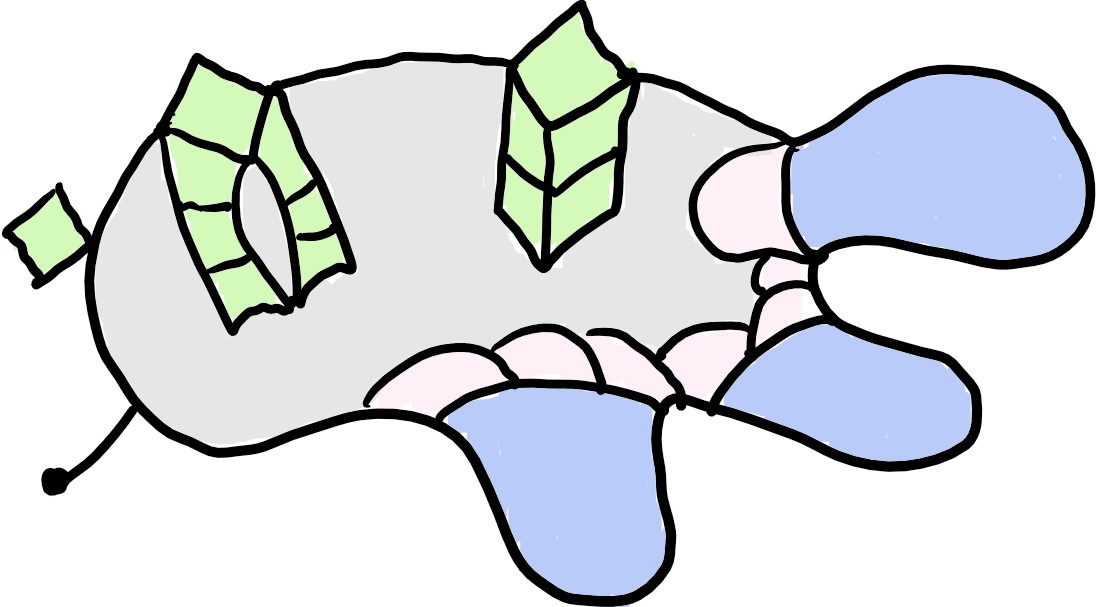}}
\caption{A potential disc diagram with a range of features: a spur, shells, corners, cornsquares and a cut-vertex.}
\label{fig:exdiagram}
\end{figure} 

In almost all forms of small-cancellation theory, the main technical result that facilitates proving theorems is a form of diagram classification. In the classical setting, this is known as Greendlinger's Lemma~\cite{Greendlinger60}, in the cubical setting this is  known as diagram trichotomy, or as the ``Fundamental Lemma". We state it in a simplified form that is sufficient for our applications, and which we shall call \emph{diagram dichotomy}:

\begin{theorem}[Diagram Dichotomy]\label{thm:tric}\cite{JankiewiczSmallCancellation}
Let $X^*= \langle X| \{Y_i\}\rangle$ be a cubical presentation satisfying the $C(9)$ condition, and let $D\to X^*$ be a disc diagram. Then one of the following holds:
\begin{itemize}
\item $D$ consists of a single cell,
\item $D$ has at least two shells and/or corners and/or spurs.
\end{itemize}
\end{theorem}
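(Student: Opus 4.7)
The plan is to deduce this dichotomy directly from the standard Fundamental Lemma / Diagram Trichotomy for cubical presentations, as proved in \cite{WiseIsraelHierarchy} and refined in \cite{JankiewiczSmallCancellation}. First I would replace $D$ by a disc diagram of minimal complexity having the same boundary path as the original; by Remark~\ref{rmk:mincompisreduced}, such a $D$ is reduced in the sense of Definition~\ref{def:reduced}, so the reducedness hypotheses needed to invoke the Fundamental Lemma are automatically met. This reduction is essentially free and lets us exclude interior pathologies (bigons, nonogons, monogons, cornsquares abutting cone-cells, cancellable pairs of squares, combinable cone-cells, and inessential boundary paths on interior cone-cells).

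The heart of the argument is a combinatorial Gauss--Bonnet computation on the reduced $D$. One assigns combinatorial angles at the corners of squares and at each sector of each cone-cell so that the total curvature satisfies $\sum \kappa = 2\pi \chi(D) = 2\pi$. With the standard assignment, interior vertices of $D$ receive non-positive curvature (since $X$ is non-positively curved, links are flag and so any interior vertex has at least four incident square corners), interior squares contribute $0$, and every interior cone-cell receives non-positive curvature because the $C(9)$ hypothesis forces its boundary to decompose into at least nine pieces, which more than absorbs the $2\pi$ of available angle. Consequently, the full $2\pi$ of positive curvature must concentrate on boundary features, namely spurs, corners, and shells, each of which contributes strictly less than $2\pi$ under $C(9)$ (for shells, because the outerpath is a concatenation of at least $9-4=5$ pieces). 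Therefore, if $D$ is not a single cell, it must carry at least two such features.

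The main obstacle is the careful bookkeeping of curvature at the interfaces between the square subregions and the cone-cells of $D$, where configurations like cornsquares abutting cone-cells can masquerade as positive interior curvature and upset the Gauss--Bonnet tally. This is precisely where the reducedness conditions (2)--(4) and (6) of Definition~\ref{def:reduced} are indispensable. The dichotomy in our statement -- as opposed to a trichotomy that also admits an isolated cornsquare-on-cone-cell configuration as in the original formulation of \cite{WiseIsraelHierarchy} -- is a direct consequence of reducedness condition (2), which outright forbids such cornsquares in $D$.
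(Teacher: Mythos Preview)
The paper does not prove this statement; it is quoted from \cite{JankiewiczSmallCancellation} as a background result, so there is no proof in the paper to compare your proposal against. Your sketch is broadly the right shape---the combinatorial Gauss--Bonnet argument with positive curvature forced onto boundary features is exactly the method used in the cited references---but two points deserve correction.

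First, your opening move of replacing $D$ by a minimal-complexity diagram with the same boundary path is not innocuous: the conclusion concerns the shells, corners, and spurs of $D$ itself, and these are not preserved under such a replacement. In practice the Fundamental Lemma is stated and proved for reduced (equivalently, minimal-complexity) diagrams, and the paper only ever invokes Theorem~\ref{thm:tric} for such diagrams (see the proof of Theorem~\ref{thm:cub2}). So the missing hypothesis is almost certainly an omission in the statement rather than something your argument must manufacture; you should flag this rather than silently swap out the diagram.

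Second, your closing remark about dichotomy versus trichotomy misidentifies the third alternative. In Wise's original formulation the extra case is the \emph{ladder}: a diagram that is a linear chain of cone-cells and/or rectangles, with exactly two extremal features. Jankiewicz's version simply absorbs the ladder into the ``at least two shells/corners/spurs'' count, yielding a dichotomy. Reducedness condition~(2) of Definition~\ref{def:reduced} is indeed needed for the curvature bookkeeping at cone-cell boundaries, but it is not what collapses the trichotomy to a dichotomy.
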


\begin{theorem}\cite{JankiewiczSmallCancellation}\label{thm:embeds}
Let $X^*=\langle X|\{Y_i\}\rangle$ be a cubical presentation satisfying the $C(9)$ condition. Then each $Y_i$ embeds in the universal cover
$\widetilde{X^*}$ of the coned-off space $X^*$.
\end{theorem}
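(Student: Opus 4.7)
The plan is to argue by contradiction via a minimal-complexity disc-diagram analysis. Suppose some elevation $Y := Y_i \to \widetilde{X^*}$ fails to be injective; then there is a combinatorial path $\sigma$ in $Y$ joining two distinct points that are identified in $\widetilde{X^*}$, so its image is a nullhomotopic loop in $\widetilde{X^*}$, hence also in $X^*$. I will choose $\sigma$ of minimal length among all such ``bad'' paths, and let $D \to X^*$ be a disc diagram of minimal complexity (Definition~\ref{def:mincomp}) with boundary path equal to the image of $\sigma$. The strategy is then to apply Theorem~\ref{thm:tric} to $D$ and rule out each possible outcome.

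If $D$ is a single cell, the analysis splits in two. If $D$ is a single square, then $\sigma$ traverses its boundary, and since $Y \to X$ is a local isometry the square lifts into $Y$, forcing $\sigma$ to be nullhomotopic in $Y$ and its endpoints to coincide --- a contradiction. If $D$ is a single cone-cell mapping to some $Y_j$, then $\sigma$ factors through $Y_j$; comparing the elevation of $Y$ through which $\sigma$ runs to the elevation of $Y_j$ attached to $D$ (using $C(9)$ if they differ, or essentiality of $\partial C$ in $Y_j$ if they coincide) again forces the endpoints of $\sigma$ to agree in $Y$.

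Otherwise $D$ has at least two shells, corners, or spurs, each lying along $\partial D$. A spur along $\sigma$ is a length-two backtrack that shortens $\sigma$, contradicting minimality. A corner on $\sigma$ consists of consecutive edges $a, b$ spanning a square $s$; since $Y \to X$ is a local isometry, $s$ lifts to $Y$, and the hexagon move replacing $ab$ by the opposite pair of sides of $s$ yields a same-length path in $Y$ whose associated disc diagram is strictly simpler, contradicting the minimal choice of $D$ under an appropriate secondary tie-breaking. The substantive step is the shell case: a shell $C \to Y_j$ with outerpath $Q$ lying along $\sigma \subset Y$ and innerpath of at most four pieces must, by reducedness of $D$ and the $C(9)$ condition, have $Q$ decompose as a concatenation of at least five pieces. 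The path $Q$ lies in the intersection of the elevation of $Y$ containing $\sigma$ with the elevation of $Y_j$ determined by $C$; if these elevations differ, $Q$ would be contained in a single cone-piece, contradicting that it decomposes as at least five pieces. Hence the two elevations coincide, and replacing $Q$ in $\sigma$ by the innerpath traversed in reverse gives a new bad path of no greater length whose disc diagram has one fewer cone-cell, contradicting the minimal complexity of $D$.

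The hard part is the shell case: everything rides on the $C(9)$ hypothesis to exclude a shell whose cone is attached along a distinct elevation from that of $Y$, since the innerpath supplies at most four pieces of $\partial C$ and therefore at least five pieces must appear along $Q$, forcing the two elevations to coincide. The single-cell and corner cases depend on $Y \to X$ being a local isometry so that small cubes lift back into $Y$, together with a lexicographic minimality on the pair (length of $\sigma$, complexity of $D$) so that each reduction actually decreases the quantity being minimized.
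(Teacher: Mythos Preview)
The paper does not prove this statement itself; Theorem~\ref{thm:embeds} is quoted from \cite{JankiewiczSmallCancellation} without proof, so there is nothing in the paper to compare your argument against directly. Your overall strategy---produce a bad path $\sigma$ in $Y$, fill its image by a minimal-complexity disc diagram $D$, and eliminate each outcome of Theorem~\ref{thm:tric}---is the standard one and is sound in outline.

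There is, however, a genuine gap in the shell step. You declare the minimisation order to be $(|\sigma|,\,Comp(D))$ lexicographically, and in the shell case you then claim that replacing the outerpath $Q$ by the reversed innerpath produces a bad path ``of no greater length''. That claim is not justified: under the combinatorial $C(9)$ condition pieces carry no length bound, so the innerpath (at most four pieces) may well be strictly longer than $Q$. With length as the primary invariant, the new path $\sigma'$ can be longer than $\sigma$, and the fact that $D\setminus C$ has one fewer cone-cell no longer contradicts your minimal choice.

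The repair is simple: reverse the order and minimise $Comp(D)$ over \emph{all} bad paths first, using $|\sigma|$ only to break ties. Then the shell case (cone-cell count drops), the corner case (same length, square count drops), and the spur case (same complexity, length drops) each yield the required contradiction. Two small further remarks: what you call a ``hexagon move'' in the corner case is just pushing across a single square; and in the single-cone-cell case with coinciding elevations, the contradiction comes not from essentiality of $\partial C$ but from the fact that $\sigma$ and $\partial C$ then have the same lift to the common elevation, forcing $\sigma$ to close up in $Y$.
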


\subsection{Asphericity}

We will require the following fundamental fact about combinatorial paths in $2$-complexes.

\begin{theorem}[The Van-Kampen Lemma]\label{thm:VK}
Let $X$ be a combinatorial $2$-complex. Let $P \rightarrow X^1$ be a closed combinatorial path. Then $P$ is nullhomotopic if and only if there exists a disc diagram $D$ in $X$ with $\partial D \cong P$ so that there is a commutative diagram:

\[\begin{tikzcd}
\partial D \arrow[r] \arrow[d] & D \arrow[d] \\
P \arrow[r]                      & X          
\end{tikzcd}\]

\end{theorem}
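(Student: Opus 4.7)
The plan is to prove the two implications separately. The \emph{if} direction is the easy one: given a disc diagram $D \to X$ with $\partial D \cong P$, the complex $D$ is by definition compact and contractible, so the attaching loop $\partial D \colon S^1 \to D$ is null-homotopic inside $D$ and extends to a map $D^2 \to D$. Composing with the combinatorial map $D \to X$ yields a map $D^2 \to X$ whose restriction to $S^1$ is $P$, exhibiting $P$ as null-homotopic in $X$.

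For the \emph{only if} direction, I would pass through a presentation of $\pi_1(X)$. Fix a basepoint $x_0 \in X^0$ and a spanning tree $T$ of $X^1$; then $\pi_1(X^1, x_0)$ is free with basis $E$ corresponding to the edges of $X^1 \setminus T$ (rerouted through $T$). Each $2$-cell $c_\alpha$ of $X$ contributes a relator $r_\alpha \in F(E)$ coming from its attaching map, and $\pi_1(X, x_0) \cong F(E)/\langle\langle \{r_\alpha\}\rangle\rangle$. If $P$ is null-homotopic in $X$, then the word $w_P \in F(E)$ that the combinatorial path $P$ spells out lies in the normal closure of the $r_\alpha$, and hence admits a decomposition
\[
w_P \;=\; \prod_{i=1}^{n} g_i \, r_{\alpha_i}^{\epsilon_i} \, g_i^{-1}
\]
in $F(E)$, with $g_i \in F(E)$ and $\epsilon_i \in \{\pm 1\}$.

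From such a decomposition I would build a preliminary ``lollipop tree'' $D_0$: for each $i$ take a combinatorial $2$-cell whose boundary is labelled by $r_{\alpha_i}^{\epsilon_i}$, attach an edge-path labelled by $g_i$ to one of its vertices, and wedge all $n$ resulting lollipops at a common basepoint. The space $D_0$ is a planar, compact, contractible combinatorial $2$-complex, it carries a tautological combinatorial map $D_0 \to X$, and its boundary path spells the unreduced product $\prod_i g_i\,r_{\alpha_i}^{\epsilon_i}\,g_i^{-1}$, which equals $w_P$ in $F(E)$.

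Finally, I would obtain $D$ from $D_0$ by iterated folding: whenever the boundary traverses two consecutive edges that are inverses of one another and map identically to $X$, I identify them. Each fold strictly shortens the boundary word, preserves compactness, contractibility, the combinatorial map to $X$, and the planar embedding (because the fold happens at a tail of the tree of $2$-cells). After finitely many folds the boundary spells exactly $P$, yielding the required diagram. The main delicate point is verifying that the folds do preserve the data of a disc diagram in the sense defined, i.e.\ that at each stage the resulting complex remains contractible and embeds in $S^2$; this is what forces the folds to be performed only at genuine length-two backtracks along tree-like portions of the diagram, and it is the step where the argument has to be written with care.
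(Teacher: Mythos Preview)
The paper does not prove this theorem; it is stated without proof as a ``fundamental fact about combinatorial paths in $2$-complexes'' in the background section, so there is no in-paper argument to compare against. Your approach is the standard one (a lollipop diagram built from a conjugate-product decomposition, followed by foldings), and the \emph{if} direction is fine.

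There is, however, one genuine omission in the \emph{only if} direction. Folding successively removes length-two backtracks from the boundary word, so it can only \emph{shorten} the boundary toward its freely reduced form. But the path $P$ you are trying to realise need not itself be freely reduced as an edge-word: it may contain backtracks. Thus folds alone will in general carry the boundary of $D_0$ to the common \emph{reduced} form of $w_P$ and of $P$, not to $P$ itself. The fix is to allow the inverse operation as well: at a boundary vertex, attach a \emph{spur} --- a single new edge mapping to a prescribed edge of $X$ --- which inserts a cancelling pair $ee^{-1}$ into the boundary word. Spur additions, like folds, preserve contractibility, planarity, and the combinatorial map to $X$. Since any two edge-words representing the same element of the free group on the edge set are connected by a finite sequence of insertions and deletions of cancelling pairs, interleaving folds and spur-additions carries the boundary of $D_0$ to $P$ exactly. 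With this amendment your argument goes through.
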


For our purposes, the spherical version of Van-Kampen's Lemma will also be extremely useful. That is, we need to be able to assert that all elements of $\pi_2$ have diagram representatives:

\begin{theorem}[The Spherical Van-Kampen Lemma]\label{thm:SVK}
Let $X$ be a combinatorial $2$-complex. Then every homotopy class of maps  $S^2 \rightarrow X$ is represented by a spherical diagram  $\Sigma \rightarrow X$. 
\end{theorem}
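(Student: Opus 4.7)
My plan is to exhibit a suitable CW structure on $S^2$ with respect to which a given map $f \colon S^2 \to X$ becomes a combinatorial map. This will parallel the disc case (Theorem~\ref{thm:VK}), and the argument will proceed in three stages: cellular approximation, combinatorialisation of the $1$-skeleton, and a cell-by-cell filling of the $2$-cells via the disc Van-Kampen lemma.

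First, I would apply the Cellular Approximation Theorem to homotope $f$ to a cellular map with respect to some CW structure on $S^2$, so that $0$-cells of $S^2$ map to $0$-cells of $X$ and $1$-cells map into $X^{(1)}$. Next, by subdividing $1$-cells of $S^2$ (introducing new $0$-cells that can be pushed onto vertices of $X$) and performing small homotopies, I would arrange that each $1$-cell of $S^2$ maps combinatorially as a concatenation of edges of $X^{(1)}$. Finally, for each $2$-cell $c$ of $S^2$ the restriction $f|_{\partial c}$ is a combinatorial loop in $X^{(1)}$ that is nullhomotopic (witnessed by $f|_c$); Theorem~\ref{thm:VK} then produces a disc diagram $D_c \to X$ with $\partial D_c \cong \partial c$. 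Gluing these fillings along the $1$-skeleton of $S^2$ yields a combinatorial $2$-complex $\Sigma$ homeomorphic to $S^2$, equipped with a combinatorial map $\Sigma \to X$; this is the candidate spherical diagram.

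The main obstacle is ensuring that the resulting spherical diagram represents the \emph{same} homotopy class as $f$, rather than differing from it by some other element of $\pi_2(X)$, since two combinatorial fillings of a common boundary loop may a priori differ by a spherical class. To handle this I would refine the first stage via a transversality argument: choose the CW structure on $S^2$ fine enough, and perturb $f$, so that the preimage $f^{-1}(X^{(1)})$ is a finite graph in $S^2$, every complementary region is an open disc, and each such open disc maps into the interior of a single open $2$-cell of $X$. Each $f|_c$ then factors through the characteristic map of that $2$-cell of $X$, and I would take $D_c$ to be the corresponding standard filling (of degree $\pm 1$). Since any two maps from a disc into a $2$-cell of $X$ that agree on the boundary are homotopic rel boundary (the image lying in a space that is contractible relative to the image of the boundary), the local homotopies from $f|_c$ to $D_c \to X$ can be concatenated, producing a global homotopy from $f$ to $\Sigma \to X$ and completing the argument.
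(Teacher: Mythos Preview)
The paper does not give its own proof of this statement; it simply cites \cite[Section~2]{Fenn83} and remarks that the result can be extracted via the viewpoint of \emph{pictures}, which are planar duals to spherical diagrams. Your transversality argument is essentially the picture construction phrased on the diagram side: perturbing $f$ so that $f^{-1}(X^{(1)})$ is a graph whose complementary discs each map into a single open $2$-cell is exactly how one obtains a spherical picture from a map $S^2 \to X$, and dualising that picture gives the spherical diagram. So your route and the one the paper points to coincide in substance.

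One small inaccuracy worth flagging: there is no reason the lift $\tilde f \colon c \to D^2$ of $f|_c$ through the characteristic map should have boundary degree $\pm 1$; it can have any integer degree $d$ (think of $z \mapsto z^2$ on the disc). This does not break your argument---you simply take $D_c$ to be a combinatorial degree-$d$ filling (a fan of $|d|$ copies of the $2$-cell, or a degenerate tree-like diagram when $d=0$), and your final sentence still applies verbatim, since both $f|_c$ and this $D_c$ lift to the contractible $D^2$ with the same boundary map and are therefore homotopic rel boundary. If you really want degree $\pm 1$ you must also make $f$ transverse to the barycentres of the $2$-cells of $X$ and take the regions $c$ to be small discs around individual preimage points; that extra step is precisely how the ``fat vertices'' of a picture arise in Fenn's treatment.
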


Theorem~\ref{thm:SVK} is presented (with or without a proof) in slightly different ways in various sources. The version we use can be extracted from~\cite[Section 2]{Fenn83} via the viewpoint of ``pictures'', which are planar graph representations of homotopy elements and are dual to spherical diagrams. 

\section{Main theorem}\label{sec:main}

We commence this section with a definition.

\begin{definition}[Classifying space for proper actions]
Let $G$ be a group. A \emph{classifying space for proper actions} for $G$ is a $G$-CW-complex $\underbar EG$ satisfying: 
\begin{enumerate}
\item every cell stabiliser is finite,
\item for each finite subgroup $H<G$, the fixed point space  $\underbar EG^H$ is contractible.
\end{enumerate}
\end{definition}

Every group $G$ admits an $\underbar EG$, and  all models for $\underbar EG$ are $G$-homotopy equivalent; we also have the inequality $cd_\rationals(G)\leq dim (\underbar EG)$, for any group $G$ admitting a finite dimensional $\underbar EG$. 
More details, and a general construction of classifying spaces for proper actions, can be found in~\cite{Luck05}.

To prove our asphericity result, we must slightly simplify the coned-off space $X^*$ associated to a cubical presentation. Intuitively, we do this in  $\widetilde{X^*}$ by ``collapsing" or ``squashing together" various cones into a single one whenever their base spaces correspond to  elevations of a $Y_i \rightarrow X$ having the same preimage in $\widetilde X$; at this point, we remind the reader of Convention~\ref{conv:elevate}. 

The precise construction is as follows:

\begin{construction}\label{def:reduction}
Let $X^*= \langle X |\{Y_i\}_{i \in I} \rangle$ be a cubical presentation and consider the universal cover $\widetilde{X^*}$ of the coned-off space. 
Note that $X$ is a subspace of $X^*$, so the preimage $\hat X$ of $X$ in $\widetilde{X^*}$ is a covering space of $X$, namely the regular cover corresponding to $ker(\pi_1X \rightarrow \pi_1X^*)$. Consider the universal cover $\widetilde X $ of $X$.
For each $i \in I$, and for any fixed elevation $\widetilde Y_i \rightarrow \widetilde X$ of $Y_i$,  we have that $\pi_1 (Y_i,y_{i_0}) < Stab_{\pi_1(X,x_0)}(\widetilde Y_i)$. If $X^*= \langle X |\{Y_i\}_{i \in I} \rangle$ is minimal, then by definition $\pi_1 (Y_i,y_{i_0}) = Stab_{\pi_1(X,x_0)}(\widetilde Y_i)$, but in general this is not the case.

Let $\{g_\ell\pi_1 Y_i\}$ be coset representatives of  $\pi_1 (Y_i,y_{i_0})$ in $Stab_{\pi_1(X,x_0)}(\widetilde Y_i)$. 
The elevations $\{g_\ell Y_i \rightarrow \hat X\}$ have the same image in $\hat{X}\subset \widetilde{X^*}$, so their cones are all isomorphic in $ \widetilde{X^*}$. 
Thus, there is a quotient $\cup_{\ell}g_\ell C(Im_{\hat X} (Y_i)) \rightarrow C(Im_{\hat X} (Y_i))$ where all cones over $Im_{\hat X}(Y_i)$ are identified to a single cone. This extends to a quotient $\widetilde{X^*} \rightarrow \bar X^*$, which we call the \emph{reduced space} of $\widetilde{X^*}$.
\end{construction}

\begin{remark}[Cubically presenting the trivial group] 
Another way to think about $\bar X^*$ is as the cubical presentation $ \langle \hat X |\{Im_{\hat X} (Y_i) \} \rangle$, where the $Y_i$ range over all $i \in I$ and over all elevations of $Y_i$ with distict image. Of course, $\hat X$ is not compact, and there are infinitely many $Y_i$'s, so this is an ``infinitely generated'' cubical presentation with infinitely many ``relators''. It presents, in fact, the trivial group, as we show in Lemma~\ref{lem:scbar}. It is immediate from Construction~\ref{def:reduction} that if $\widetilde{X^*}$ satisfies the $C(n)$ condition for $n >0$, then so does $\bar X^*$.

Thus, there are no hidden technicalities in applying the theory from Section~\ref{sec:back} to  $\bar X^*$ rather than to $\widetilde{X^*}$. In particular, it makes sense to talk about minimal complexity diagrams and pieces, and Diagram Dichotomy and Theorem~\ref{thm:embeds} all hold for $\bar X^*$.
\end{remark}

As mentioned above, the following is a quick but important observation:

\begin{lemma}\label{lem:scbar}
 $\bar X^*$ is simply-connected.
\end{lemma}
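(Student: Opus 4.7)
The approach is to exploit the cubical-presentation interpretation of $\bar X^*$ given in the preceding remark: $\bar X^*$ coincides with the coned-off space of the (possibly infinite) cubical presentation $\langle \hat X \mid \{Im_{\hat X}(Y_i)\}\rangle$, where one cone is taken for each distinct elevation of each $Y_i$ to $\hat X$. Each local isometry $Im_{\hat X}(Y_i) \to \hat X$ is still $\pi_1$-injective (being a cover of the $\pi_1$-injective $Y_i \to X$), so Seifert-Van Kampen applied exactly as in Definition~\ref{def:cubpres} yields
\[
\pi_1 \bar X^* \;=\; \pi_1 \hat X \,\big/\, \nclose{\{\pi_1 Im_{\hat X}(Y_i)\}}^{\pi_1 \hat X}.
\]
Hence it suffices to show this normal closure exhausts $\pi_1 \hat X$.

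First I would pin down the two subgroups in question. By construction $\hat X$ is the cover of $X$ associated to $\ker(\pi_1 X \to \pi_1 X^*)$, so $\pi_1 \hat X = \nclose{\{\pi_1 Y_i\}}^{\pi_1 X}$, which is normal in $\pi_1 X$. In particular $g\pi_1 Y_i g^{-1} \subset \pi_1 \hat X$ for every $g \in \pi_1 X$, and for each elevation $g\widetilde Y_i \to \widetilde X$ the fundamental group of its image in $\hat X$ is the intersection $g\pi_1 Y_i g^{-1} \cap \pi_1 \hat X = g\pi_1 Y_i g^{-1}$.

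Next I would carry out the normal-closure bookkeeping. The distinct images contributing to the cubical presentation are indexed by double cosets $\pi_1 \hat X \,\backslash\, \pi_1 X \,/\, Stab_{\pi_1 X}(\widetilde Y_i)$; pick representatives $\{g_j\}$. Any $g \in \pi_1 X$ factors as $g = h\, g_j\, s$ with $h \in \pi_1 \hat X$ and $s \in Stab_{\pi_1 X}(\widetilde Y_i)$, and the standing convention $\pi_1 Y_i \vartriangleleft Stab_{\pi_1 X}(\widetilde Y_i)$ gives $s\pi_1 Y_i s^{-1} = \pi_1 Y_i$, so
\[
g\pi_1 Y_i g^{-1} \;=\; h\bigl(g_j\,\pi_1 Y_i\,g_j^{-1}\bigr)h^{-1}.
\]
Thus every $g\pi_1 Y_i g^{-1}$ is a $\pi_1 \hat X$-conjugate of some distinguished subgroup $g_j\pi_1 Y_i g_j^{-1} = \pi_1 Im_{\hat X}(g_j\widetilde Y_i)$. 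As these conjugates collectively generate $\nclose{\{\pi_1 Y_i\}}^{\pi_1 X} = \pi_1 \hat X$ as a subgroup, the normal closure $\nclose{\{\pi_1 Im_{\hat X}(Y_i)\}}^{\pi_1 \hat X}$ already contains $\pi_1 \hat X$, and equality forces $\pi_1 \bar X^* = 1$.

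The main subtlety to watch is the distinction between conjugation inside $\pi_1 X$ and conjugation inside $\pi_1 \hat X$: a priori we only know $\pi_1 \hat X$ is normally generated in $\pi_1 X$ by the $\pi_1 Y_i$, and upgrading this to normal generation in $\pi_1 \hat X$ of a well-chosen set of representatives is exactly what the double-coset decomposition above, combined with the normality of $\pi_1 Y_i$ in its stabilizer, accomplishes.
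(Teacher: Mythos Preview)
Your argument is correct, but the paper takes a much quicker route. Rather than computing $\pi_1\bar X^*$ via Seifert--Van Kampen and the double-coset bookkeeping, the paper simply observes that by choosing one representative cone from each family $\{g_\ell C(Im_{\hat X}(Y_i))\}_{g_\ell}$, the reduced space $\bar X^*$ sits inside $\widetilde{X^*}$ as a subspace, and the quotient map $\widetilde{X^*}\to\bar X^*$ is then a retraction onto it. Hence $\pi_1\bar X^*$ injects into $\pi_1\widetilde{X^*}=1$.

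Your approach has the virtue of making the group theory entirely explicit and of not relying on realising $\bar X^*$ as a subspace of $\widetilde{X^*}$; the paper's approach buys brevity and avoids needing the standing convention $\pi_1 Y_i \vartriangleleft Stab_{\pi_1 X}(\widetilde Y_i)$ at this particular step. One small imprecision in your write-up: the subgroup $g\pi_1 Y_i g^{-1}$ is the fundamental group of the \emph{elevation} $gY_i\to\hat X$, not necessarily of its image (the image could a priori have larger $\pi_1$, namely $\pi_1\hat X\cap gStab_{\pi_1 X}(\widetilde Y_i)g^{-1}$). This does not affect your conclusion, since coning off the image kills at least $g\pi_1 Y_i g^{-1}$, which is all you use.
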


\begin{proof}
Choosing a representative $g_0 C(Im_{\hat X} (Y_i))$ for each collection $\{g_\ell C(Im_{\hat X} (Y_i))\}_{g_\ell}$ described above, we may view the reduced space $\bar X^*$ as a subspace of $\widetilde{X^*}$. Thus, $\bar X^*$ is a retract of $\widetilde{X^*}$, and $\pi_1\bar X^*$ injects into $\pi_1\widetilde{X^*}$, and is therefore trivial.
\end{proof}

We can now state and prove our main technical theorem.

\begin{theorem}\label{thm:cub2} Let $X^*= \langle X |\{Y_i\} \rangle$ be a cubical presentation that satisfies the $C(9)$ condition. Let $\bar X^*$ be the reduced space of $\widetilde X^*$. 
Then $\pi_2\bar X^*=0$.
\end{theorem}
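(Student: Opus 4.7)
The plan is to apply the Spherical Van-Kampen Lemma (Theorem~\ref{thm:SVK}) to represent an arbitrary class $\alpha \in \pi_2 \bar X^*$ by a spherical diagram $\Sigma \to \bar X^*$ of minimal complexity (in the spherical analogue of Definition~\ref{def:mincomp}), and then show that $\Sigma$ must reduce to a single vertex. By the spherical version of Remark~\ref{rmk:mincompisreduced}, $\Sigma$ is reduced, so the analogues of conditions (r1)--(r6) of Definition~\ref{def:reduced} hold for $\Sigma$.

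First, if $\Sigma$ has no cone-cells, then $\Sigma$ factors through $\hat X \subset \bar X^*$, which as a cover of $X$ is itself non-positively curved. By Theorem~\ref{thm:npcaspherical}, $\hat X$ is aspherical, so $[\Sigma] = 0$ in $\pi_2 \hat X$ and hence in $\pi_2 \bar X^*$. A single vertex represents $0$ with complexity $(0,0)$, so by minimality $\Sigma$ itself is a single vertex and $\alpha = 0$.

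If instead $\Sigma$ contains a cone-cell $C$, I would remove its interior to form a disc diagram $D = \Sigma \setminus \mathrm{int}(C)$ with $\partial D = \partial C$, inheriting reducedness from $\Sigma$, and apply Diagram Dichotomy (Theorem~\ref{thm:tric}). If $D$ is a single cell, it must be a cone-cell $C'$ with $\partial C' = \partial C$ (a single square would make $\partial C$ a $4$-cycle with at most four pieces, contradicting $C(9)$ and the essentialness condition (r5)); then either $C,C'$ map to distinct cones of $\bar X^*$, so $\partial C$ lies inside the intersection of two cone-images and is a single cone-piece, contradicting $C(9)$ applied to the essential loop $\partial C'$, or they map to the same cone, in which case they share every vertex and are combinable, violating (r6). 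Otherwise $D$ has two or more shells, corners, or spurs on $\partial D$, and each must produce a contradiction. A corner gives a cornsquare in $\Sigma$ with outerpath on $C$, violating (r2); a spur would allow a direct simplification of $\Sigma$ at a backtrack of $\partial C$, contradicting minimality. For a shell $C'$ with outerpath $Q \subset \partial C$ and innerpath of at most $k \leq 4$ pieces: if $C,C'$ map to distinct cones then $Q$ itself is a single cone-piece, so $\partial C' = Q P_1 \cdots P_k$ decomposes into at most $k+1 \leq 5$ pieces, contradicting $C(9)$; if they map to the same cone, then $C, C'$ share a vertex on $Q$ and are combinable, again violating (r6).

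The main technical obstacle will be the shell analysis, and specifically justifying that the overlap $Q$ is a single cone-piece whenever $C$ and $C'$ map to distinct cones of $\bar X^*$. This is exactly what Construction~\ref{def:reduction} is engineered to provide: the reduced space identifies all elevations of a given $Y_i$ with the same image in $\hat X$, so distinct cones of $\bar X^*$ correspond to genuinely distinct elevations $\widetilde Y_i, \widetilde Y_{i'} \subset \widetilde X$, whose intersection is an abstract contiguous cone-piece per Definition~\ref{def:pieces}. Secondary care is needed to ensure that the features of $D$, which arises only after cutting, translate back to structures in $\Sigma$ actually prohibited by its reducedness, and to cleanly separate the distinct-cone cases (where $C(9)$ furnishes the contradiction) from the same-cone cases (where (r6) combinability does).
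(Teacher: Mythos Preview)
Your argument is correct and follows essentially the same approach as the paper's: remove a cone-cell from a minimal-complexity spherical diagram, apply Diagram Dichotomy to the resulting disc, and rule out each outcome via reducedness and the $C(9)$ bound, with the reduced-space construction guaranteeing that an overlap between cone-cells mapping to distinct cones is a genuine piece. The one point the paper makes explicit that you elide is that the removed cone-cell $C$ embeds in $\Sigma$ (so that $D$ really is a disc diagram), which follows from Theorem~\ref{thm:embeds} together with condition~(r5); conversely, your explicit split into same-cone (handled by combinability~(r6)) versus different-cone (handled by the piece count) in the shell case is cleaner than the paper's one-line treatment.
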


\begin{proof}
Let $\Sigma \hookrightarrow \bar X^*$ be a minimal complexity spherical diagram, where the minimum is taken over all  spherical diagrams representing a fixed non-zero homotopy class in $\pi_2\bar X^*$. If $\Sigma$ has no cone-cells, then it lies on the cubical part of  $\bar X^*$ and it is therefore contractible by Theorem~\ref{thm:npcaspherical}. Thus, we may assume that $\Sigma$ has a cone-cell.

Let $C_\infty$ be a cone-cell in $\Sigma$ and let $D_0$ be obtained from $\Sigma$ by removing from it the interior of $C_\infty$. Note that $D_0$ is a disc diagram. Indeed, since $\Sigma$ has minimal complexity, then $C_\infty$ embeds in $\Sigma$, as by Theorem~\ref{thm:embeds} the cone of $\bar X^*$ containing  $C_\infty$ embeds in $\bar X^*$, so a non-embedded cone-cell in $\Sigma$ would have to be nullhomotopic, contradicting Condition~\ref{it:r5} of Definition~\ref{def:reduced}. Moreover, $D_0$   has minimal complexity as any complexity reduction in $D_0$ would lead to a complexity reduction in $\Sigma$.

By Theorem~\ref{thm:tric}, either 
\begin{enumerate}
\item $D_0$ is a single vertex or a single cone-cell,
\item $D_0$ has at least 2 shells and/or corners and/or spurs.
\end{enumerate}
The first case would imply that $\Sigma$ consists exactly of two cone-cells $C_1$ and $C_2$ with $\partial C_1=\partial C_2=\sigma$. Assume $\partial C_1 \rightarrow Y_1$ and $\partial C_2 \rightarrow Y_2$. If $Y_1=Y_2$, then  $\Sigma \rightarrow \bar X^*$ factors as $\Sigma \rightarrow C(Y_1) \rightarrow \bar X^*$. But $C(Y_1)$ is contractible, because it is a cone, so $\Sigma$ is nullhomotopic in $C(Y_1)$, and therefore also in $\bar X^*$, which contradicts the choice of $\Sigma$.

Otherwise, $Y_1\neq Y_2$. If  $\sigma$ is essential in either $Y_1$ or $Y_2$, then either  $\sigma$ is a piece, and thus an essential path that is the concatenation of $<9$ pieces, contradicting the $C(9)$ condition,  or $\sigma$ is not a piece, which implies that $Y_1$ and $Y_2$ are elevations of the same $Y_i$ that differ by an element of $Stab_{\pi_1X}(\widetilde Y)$. In this case, $Y_1$ and $Y_2$ have the same image, so $C(Y_1)$ and $C(Y_2)$ are identified in $\bar X^*$, and thus $C_1=C_2$, again contradicting the minimal complexity of $\Sigma$.  If  $\sigma$ is not essential in neither $Y_1$ nor $Y_2$, then there are disc diagrams $D_1 \rightarrow Y_1, D_2 \rightarrow Y_2$ with $\partial D_1=\partial C_1$ and $\partial D_2=\partial C_2$, which together bound a spherical diagram $\Sigma' \rightarrow \hat X$. As $C(Y_1)$ and $C(Y_2)$ are contractible, $\Sigma'$ and $\Sigma$ are homotopic. Since  $\hat X$ is aspherical -- it is a covering space of a non-positively curved cube complex, and thus also non-positively curved -- then  $\Sigma'$ is nullhomotopic in $\hat X$, and therefore also in $\bar X^*$. This in turn implies that $\Sigma$ is nullhomotopic in $\bar X^*$.

In the third case, the presence of spurs immediately implies that $D_0$ is not a minimal complexity diagram. Likewise, the presence of a corner on $\partial D_0$ implies the presence of a corner on $\partial C_\infty$, contradicting that the spherical diagram $\Sigma$  has minimal complexity, as such a corner could be absorbed into $C_\infty$, reducing the number of squares in $\Sigma$ by Condition~\ref{it:r2} of Definition~\ref{def:reduced}. In particular, this implies that $D_0$ cannot be a square disc diagram.
We can therefore conclude that $D_0$ is a disc diagram with at least two shells -- in fact, all we need to use now is that $D_0$ has at least one shell.

Let $C$ be a shell of $D_0$. Then the inner-path of $C$ is at most $4$ pieces, and the outerpath of $\partial C$ coincides with a subpath of $\partial C_\infty$ in $\Sigma$. Thus, the outerpath of $C$ is a single piece, contradicting the $C(9)$ condition.
\end{proof}

We can now deduce asphericity for the reduced space $\bar X^*$ associated to a low-dimensional $C(9)$ cubical presentation, and in particular for $X^*$ when the cubical presentation is minimal:

\begin{corollary}\label{cor:lowdim}
Let $X^*=\langle X | \{Y_i\} \rangle$ be a cubical presentation that satisfies the $C(9)$ condition.  If $dim(X) \leq 2$  and $\max \{cd(\pi_1Y_i)\}=1$, then $\bar X^*$ is contractible. 
\end{corollary}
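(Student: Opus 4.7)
The plan is to combine the triviality of $\pi_1(\bar X^*)$ (Lemma~\ref{lem:scbar}) and of $\pi_2(\bar X^*)$ (Theorem~\ref{thm:cub2}) with a dimension-$2$ bound on $\bar X^*$, and then invoke Hurewicz's theorem together with Whitehead's theorem to conclude contractibility. The vanishing of the two lowest homotopy groups is already in place, so the real work is to reduce to a $2$-dimensional CW model in which vanishing of higher homology is automatic.

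First I would show that $\bar X^*$ has the homotopy type of a CW complex of dimension at most $2$. Since $\dim X \leq 2$ and $\hat X$ is a covering of $X$, we have $\dim \hat X \leq 2$. Writing $Z_i := Im_{\hat X}(Y_i)$, Construction~\ref{def:reduction} exhibits $\bar X^*$ as the space obtained from $\hat X$ by attaching, for each distinct image $Z_i$, a cone $C(Z_i)$ along the subcomplex inclusion $Z_i \hookrightarrow \hat X$. Each $C(Z_i)$ is contractible and the attaching map is a cofibration, so collapsing each cone to a point yields a homotopy equivalence
\[
\bar X^* \;\simeq\; \hat X \big/ \{Z_i\},
\]
and the right-hand side is a CW complex of dimension at most $\dim \hat X \leq 2$. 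The hypothesis $\max\{cd(\pi_1 Y_i)\} = 1$, together with Stallings--Swan and the asphericity of non-positively curved cube complexes (Theorem~\ref{thm:npcaspherical}), ensures that each $Y_i$ is in fact homotopy equivalent to a graph, giving an alternative $2$-dimensional model obtained by replacing each cone $C(Y_i)$ by a cone on a graph directly at the level of $X^*$.

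Now let $W := \hat X / \{Z_i\}$. Then $W$ is simply connected with $\pi_2(W) = \pi_2(\bar X^*) = 0$. By the Hurewicz theorem, $H_1(W) = 0$ and $H_2(W) \cong \pi_2(W) = 0$; since $\dim W \leq 2$, we also have $H_n(W) = 0$ for every $n \geq 3$. Hence $W$ is a simply connected CW complex with trivial reduced homology, so Whitehead's theorem implies that $W$ is contractible, and therefore so is $\bar X^*$.

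The main obstacle lies in justifying that the cone-collapse gives a bona fide homotopy equivalence onto a $2$-dimensional CW complex. This reduces to observing that each $Z_i$ is a subcomplex of $\hat X$ (so the attaching maps are cofibrations) and appealing to the standard fact that collapsing a contractible CW subcomplex is a homotopy equivalence; the $cd \leq 1$ hypothesis serves to make the $2$-dimensionality transparent and uniform with Theorems~\ref{thm:introA} and~\ref{thm:introB}.
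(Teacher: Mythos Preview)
Your endgame---simply connected, $\pi_2=0$, homology vanishing above degree~$2$, then Hurewicz and Whitehead---is correct and is also how the paper finishes. But the route you take to kill the higher homology is different from the paper's, and your primary argument has a gap.

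The problem is the step ``collapsing each cone to a point yields $\bar X^*\simeq \hat X/\{Z_i\}$''. Collapsing a \emph{single} contractible CW subcomplex is a homotopy equivalence, but here you are collapsing infinitely many cones whose bases $Z_i$ may overlap in $\hat X$. After collapsing $C(Z_1)$, the image of $C(Z_2)$ is $C(Z_2)/(Z_1\cap Z_2)$, which need not be contractible; equivalently, $\bigsqcup_i Z_i\to\hat X$ is not injective, hence not a cofibration, so the strict pushout $\hat X\cup_{\bigsqcup Z_i}(\mathrm{pts})$ is not automatically a homotopy pushout. Notice that this argument, as written, never uses the hypothesis $cd(\pi_1 Y_i)\le 1$, which is a warning sign. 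Your alternative---replace each $Z_i$ by a homotopy-equivalent graph $\Gamma_i$---does work and does use the hypothesis: since $\bigsqcup Z_i\hookrightarrow\bigsqcup C(Z_i)$ is a cofibration, $\bar X^*$ is a homotopy pushout of $\hat X\leftarrow\bigsqcup Z_i\to\bigsqcup *$, and by homotopy invariance this agrees with the mapping cone of $\bigsqcup\Gamma_i\to\hat X$, which is honestly $2$-dimensional. That should be your main line, not a parenthetical.

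The paper takes a different route: rather than producing a $2$-dimensional model, it applies Mayer--Vietoris to the decomposition $\bar X^*=\hat X\cup\mathbf{C(Y)}$ with intersection $\mathbf{Y}=\bigsqcup Z_i$. The hypothesis that each $Y_i$ is homotopy equivalent to a graph gives $H_{n}(\mathbf{Y})=0$ for $n\ge 2$, and $\dim\hat X\le 2$ gives $H_n(\hat X)=0$ for $n\ge 3$; together with contractibility of the cones this forces $H_n(\bar X^*)=0$ for $n\ge 3$, while $H_2(\bar X^*)\cong\pi_2(\bar X^*)=0$ comes from Theorem~\ref{thm:cub2} and Hurewicz. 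This is more bookkeeping but makes explicit where each hypothesis enters, and it sidesteps the cone-overlap issue entirely.
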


\begin{proof}
As explained in Construction~\ref{const:covers},  the preimage $\hat X$ of $X$ in  $\widetilde{X^*}$ is a covering space of $X$,  corresponding to $ker(\pi_1X \rightarrow \pi_1X^*)$. 
By Theorem~\ref{thm:embeds}, each $Y_i$ embeds in $\widetilde{X^*}$, and thus also in the quotient  $\bar X^*$.
Viewing  $\bar X^*$ as a cubical presentation, $\bar X^*$ decomposes as the union of its cubical part $\hat X$ and the cones over all elevations of $Y_i$'s with distinct images in $\widetilde{X^*}$. 
To simplify notation, let $$\bigsqcup_i \bigsqcup_{gStab_{\pi_1(X,x_0)}(\widetilde Y_i) \in \pi_1X/Stab_{\pi_1(X,x_0)}(\widetilde Y_i)}  gY_i:= \mathbf{Y} $$and $$ \bigsqcup_i \bigsqcup_{gStab_{\pi_1(X,x_0)}(\widetilde Y_i) \in \pi_1X/Stab_{\pi_1(X,x_0)}(\widetilde Y_i)}  gC(Y_i):= \mathbf{C(Y)},$$ 

And note that $$\hat X \cap \mathbf{C(Y)}= \mathbf{Y}.$$

We have the Mayer-Vietoris sequence:
\[
    \begin{tikzcd}[arrows=to]
        \cdots \rar & H_n(\mathbf{Y}) \rar & H_n(\hat X) \oplus H_n (\mathbf{C(Y)}) \rar & H_n(\bar X^*) \rar & \hphantom{0}\\
        \hphantom{\cdots} \rar 
        & H_{n-1}(\mathbf{Y}) \rar 
        & \makebox[\widthof{$H_n(A) \oplus H_n(B)$}][c]{$\cdots\hfill \cdots$} \rar
        &  H_0(\hat X^*) \rar & 0
    \end{tikzcd}
\]

Since $\mathbf{Y}$ is homotopy equivalent to a graph, then $H_n(\mathbf{Y})=0$ whenever $n \geq 2$, so we get isomorphisms $H_n(\hat X)  \cong  H_n(\bar X^*)$ for each $n \geq 2$, since $C(gY_i))$ is contractible for each $g \in \pi_1X^* $ and $i \in I$. Now, $H_2(\bar X^*)\cong \pi_2\bar X^*=0$ by Theorem~\ref{thm:cub2} and Hurewicz's Theorem, and $H_3(\bar X^*)=0$ since the sequence is exact and the terms on the right and left of $H_3(\bar X^*)$ are equal to zero. Since $\bar X^*$ is $2$-connected, we may apply Hurewicz again to conclude that $\pi_3\bar X^*=0$. 

The proof is now finished:  since by Theorem~\ref{thm:cub2}, $\pi_1 \bar X^*=0$, and  $\bar X^*$ has no cells of dimension $\geq 3$, then $H_n(\bar X^*)\cong \pi_n\bar X^*=0$ for all $n \in \naturals$.
\end{proof}

The first of our two theorems is now established:

\begin{theorem}\label{thm:A}
Let $X^*=\langle X | \{Y_i\}_{i \in I} \rangle$ be a minimal cubical presentation that satisfies the $C(9)$ condition. Let $\pi_1 X^*=G$. If $dim(X)\leq 2$ and each $Y_i$ is homotopy equivalent to a graph, then $X^*$ is a $K(G,1)$, so $G$ is torsion-free and $gd(G)\leq 2$.
\end{theorem}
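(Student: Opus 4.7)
The plan is to derive Theorem~\ref{thm:A} from Corollary~\ref{cor:lowdim} by showing that, under the minimality hypothesis, the reduced space $\bar X^*$ coincides with the universal cover $\widetilde{X^*}$. Minimality asserts that $\pi_1(Y_i, y_{i_0}) = Stab_{\pi_1(X, x_0)}(\widetilde{Y_i})$ for every $i \in I$. Consequently, in Construction~\ref{def:reduction}, the collection of coset representatives $\{g_\ell \pi_1 Y_i\}$ of $\pi_1 Y_i$ inside $Stab_{\pi_1 X}(\widetilde Y_i)$ reduces to the trivial coset alone. Therefore no two distinct cones in $\widetilde{X^*}$ share an image in $\hat X$, and the quotient $\widetilde{X^*} \to \bar X^*$ is a homeomorphism.

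With this identification in hand, Corollary~\ref{cor:lowdim} applies directly: since $dim(X) \leq 2$ and each $Y_i$ is homotopy equivalent to a graph, we deduce that $\bar X^* = \widetilde{X^*}$ is contractible. Combined with $\pi_1 X^* = G$, this gives that $X^*$ is a $K(G,1)$. To bound the geometric dimension, I then observe that $X^*$ is homotopy equivalent to a $2$-dimensional CW complex: replacing each $Y_i$ by a homotopy-equivalent graph $\Gamma_i$ and attaching cones via $\varphi_i \circ h_i$, where $h_i \colon \Gamma_i \to Y_i$ is the homotopy equivalence, produces a mapping cone that is homotopy equivalent to $X^*$ by homotopy invariance of mapping cones and has dimension at most $2$. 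Hence $gd(G) \leq 2$.

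Finally, torsion-freeness follows formally from the existence of a finite-dimensional $K(G,1)$: one obtains $cd(G) \leq 2 < \infty$, and any nontrivial finite subgroup $H < G$ would force $cd(H) = \infty$, since $K(H,1)$ is infinite-dimensional for $H$ finite and nontrivial. Hence $G$ is torsion-free. The main technical subtlety in the argument is verifying the identification $\widetilde{X^*} \cong \bar X^*$ directly from Construction~\ref{def:reduction} under minimality; the remainder is a combination of Corollary~\ref{cor:lowdim} with standard facts about mapping cones and classifying spaces.
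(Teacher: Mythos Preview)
Your proposal is correct and follows essentially the same approach as the paper: both derive the result directly from Corollary~\ref{cor:lowdim} via the identification $\bar X^* = \widetilde{X^*}$ guaranteed by minimality. Your additional explanation of the $gd(G)\leq 2$ bound (via homotopy invariance of mapping cones) and torsion-freeness (via finite cohomological dimension) simply makes explicit what the paper leaves implicit in the statement.
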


\begin{proof}
This is a direct consequence of Corollary~\ref{cor:lowdim}, since then, by minimality, $\bar X^*=\widetilde{X^*}$, so $\widetilde{X^*}$ is contractible. 
\end{proof}

We point out that neither Theorem~\ref{thm:cub2} nor Corollary~\ref{cor:lowdim} require the cubical presentation $X^*=\langle X | \{Y_i\}_{i \in I} \rangle$ to have finitely many relators, or for these relators to be compact. Thus, Theorem~\ref{thm:A} holds even if the $Y_i$ have infinitely-generated fundamental group, and even if $|I|= \infty$. However, this is not the case for the results that follow: for the remainder of this section, we adopt the following convention.

\begin{conv} Let $X^*=\langle X | \{Y_i\}_I \rangle$ be a cubical presentation. If $X^*$ is \emph{not} minimal, then we always assume that $|I|< \infty$, and that each $Y_i$ is compact.
\end{conv}

\begin{lemma}\label{lem:stabs} Let $X^*=\langle X | \{Y_i\}_{i \in I} \rangle$ be a symmetric cubical presentation. 
Then  $\bar X^*$ is a $\pi_1 X^*$-CW-complex, and if $v$ is a cone-vertex of $\bar X^*$ corresponding to a cone over some $ Y_i$, then $Stab_{\pi_1X^*}(v)=Stab_{\pi_1X}(\widetilde Y_i)/\pi_1{Y_i}$.
\end{lemma}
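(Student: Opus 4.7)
The plan is to first endow $\bar X^*$ with a $\pi_1 X^*$-CW structure by descending the deck-group action from $\widetilde{X^*}$, and then compute stabilisers of cone-vertices by tracking which cone-vertices of $\widetilde{X^*}$ collapse together under the quotient map $q\colon\widetilde{X^*}\to\bar X^*$ of Construction~\ref{def:reduction}.

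For the first assertion, I will note that $\pi_1 X^*$ acts freely and cellularly on $\widetilde{X^*}$ as the deck group and preserves the subcomplex $\hat X$, since $\hat X$ is the preimage of $X\subset X^*$. The identification defining $\bar X^*$ collapses together those cones of $\widetilde{X^*}$ whose bases have coinciding images in $\hat X$, and this condition is $\pi_1 X^*$-invariant: if $C_1,C_2$ are cones whose bases project to the same subspace $Y'\subset\hat X$, then $gC_1,gC_2$ have bases both projecting to the single subspace $g\cdot Y'$. Hence $q$ is equivariant, and $\bar X^*$ inherits a cellular $\pi_1 X^*$-action.

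For the second assertion, fix a base elevation $\widetilde Y_i\subset\widetilde X$, let $\widehat Y_i\subset\hat X$ be its image, and let $v$ be the cone-vertex of $\bar X^*$ over $\widehat Y_i$. Set $N:=\ker(\pi_1 X\twoheadrightarrow\pi_1 X^*)$, so $\hat X=\widetilde X/N$. Because $\pi_1 X^*$ acts freely on $\widetilde{X^*}$, the stabiliser of $v$ equals the setwise stabiliser in $\pi_1 X^*$ of the set of cone-vertices of $\widetilde{X^*}$ that collapse to $v$, which in turn coincides with $Stab_{\pi_1 X^*}(\widehat Y_i)$. A direct coset argument---using that $g\widetilde Y_i$ and $\widetilde Y_i$ have the same image in $\hat X$ if and only if $g\in N\cdot Stab_{\pi_1 X}(\widetilde Y_i)$---yields
\[
Stab_{\pi_1 X^*}(v)\;\cong\;Stab_{\pi_1 X}(\widetilde Y_i)\big/\bigl(Stab_{\pi_1 X}(\widetilde Y_i)\cap N\bigr).
\]

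The main obstacle is the identification $Stab_{\pi_1 X}(\widetilde Y_i)\cap N=\pi_1 Y_i$, which is where the $C(9)$ hypothesis enters. The inclusion $\supseteq$ is immediate since $N$ is the normal closure of the subgroups $\pi_1 Y_j$ and $\pi_1 Y_i\leq Stab_{\pi_1 X}(\widetilde Y_i)$. For the reverse, I will invoke Theorem~\ref{thm:embeds} to conclude that $Y_i$ embeds in $\hat X\subset\widetilde{X^*}$, so that $\widetilde Y_i\to\hat X$ factors as the universal covering $\widetilde Y_i\to Y_i$ followed by an embedding $Y_i\hookrightarrow\hat X$, whose deck group is exactly $\pi_1 Y_i$. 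Given $g\in Stab_{\pi_1 X}(\widetilde Y_i)\cap N$, the action of $g$ on $\widetilde Y_i$ descends to the identity on $Y_i\subset\hat X$ (since $N$ acts trivially on $\hat X$), hence $g$ acts on $\widetilde Y_i$ as some $h\in\pi_1 Y_i$; freeness of the $\pi_1 X$-action on $\widetilde X$ then forces $g=h\in\pi_1 Y_i$. Combining these identities gives the stated formula. I expect the symmetry hypothesis not to feature in this argument directly; it enters downstream via Lemma~\ref{lem:symm} to ensure finiteness of the stabilisers $Stab_{\pi_1 X}(\widetilde Y_i)/\pi_1 Y_i$, which is needed when building classifying spaces for proper actions from $\bar X^*$.
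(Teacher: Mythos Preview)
Your argument is correct and follows the same overall strategy as the paper's proof: descend the free deck-group action on $\widetilde{X^*}$ through the equivariant quotient $q$, then identify the stabiliser of a cone-vertex with the image of $Stab_{\pi_1X}(\widetilde Y_i)$ in $\pi_1X^*$. The paper's proof is a two-line sketch that simply asserts $Stab_{\pi_1X^*}(v)=\{g_\ell\pi_1Y_i\}$; your version is more careful in two respects worth highlighting.

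First, your intermediate identification $Stab_{\pi_1X^*}(v)\cong Stab_{\pi_1X}(\widetilde Y_i)/(Stab_{\pi_1X}(\widetilde Y_i)\cap N)$ is the honest output of the coset computation, and the further step $Stab_{\pi_1X}(\widetilde Y_i)\cap N=\pi_1Y_i$ genuinely requires the embedding of $Y_i$ in $\hat X$ (Theorem~\ref{thm:embeds}), hence the $C(9)$ hypothesis. The paper's statement of the lemma omits $C(9)$, but the lemma is only ever invoked in the $C(9)$ context, and without it the formula would read $Stab/(Stab\cap N)$ rather than $Stab/\pi_1Y_i$; your proof makes this dependence explicit and supplies the missing argument. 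Second, you correctly observe that the symmetry hypothesis plays no role in this lemma itself---it is used only downstream (Lemma~\ref{lem:finitestab}) to conclude that the stabilisers are finite.
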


\begin{proof}
The group $\pi_1 X^*$ acts on $\widetilde {X^*}$ by permuting the cones. The action is a covering space action, and in particular is free, and induces an action on $\bar X^*$. 
For each cone-vertex $v$ in $\bar X^*$, $Stab_{\pi_1X^*}(v)=Stab_{\pi_1X^*}(C(Im_{\hat X} (\widetilde Y_i))=\{g_\ell\pi_1 Y_i\}$ where $C(Im_{\hat X} (\widetilde Y_i))$ is the cone with $v$ as its cone-vertex, and $\{g_\ell\pi_1 Y_i\}$ are left coset representatives of  $\pi_1 (Y_i,y_{i_0})$ in $Stab_{\pi_1(X,x_0)}(\widetilde Y_i)$.  This is exactly the quotient $Stab_{\pi_1X}(\widetilde Y_i)/\pi_1{Y_i}$.
\end{proof}

\begin{lemma}\label{lem:finitestab}
If $X^*=\langle X | \{Y_i\} \rangle$ is a symmetric cubical presentation, 
then the action of $\pi_1X^*$ on $\bar X^*$ has finite cell-stabilisers. 
\end{lemma}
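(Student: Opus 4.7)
The plan is to split the cells of $\bar X^*$ into two classes: the cubical cells lying in the image of $\hat X$, and the pyramidal cells belonging to the cones attached to each $Im_{\hat X}(Y_i)$ (the cone-vertices being the apices). I will handle each class separately and invoke the previous two lemmas together with the symmetry hypothesis.

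For cubical cells, I would begin by observing that the action of $\pi_1 X^*$ on $\widetilde{X^*}$ is a free covering-space action, and that Construction~\ref{def:reduction} is manifestly $\pi_1 X^*$-equivariant: an element $g \in \pi_1 X^*$ permutes the family of cones $\{g_\ell C(Im_{\hat X}(Y_i))\}_{g_\ell}$ sharing a common base, so the identifications respect the action and hence descend to an action on $\bar X^*$, as already noted in Lemma~\ref{lem:stabs}. Since the quotient map $\widetilde{X^*} \to \bar X^*$ only identifies points lying strictly inside cones, its restriction to $\hat X$ is an embedding, and so if $g$ fixes a cubical cell $c \in \hat X \subseteq \bar X^*$ then $g\cdot c = c$ already in $\widetilde{X^*}$, forcing $g = 1$ by freeness. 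In particular, cubical cells have trivial stabiliser.

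For cells inside cones, Lemma~\ref{lem:stabs} identifies the stabiliser of a cone-vertex $v$ over $Y_i$ with the quotient $Stab_{\pi_1X}(\widetilde Y_i)/\pi_1{Y_i}$. Under the symmetry hypothesis, Lemma~\ref{lem:symm} gives $[Stab_{\pi_1X}(\widetilde Y_i):\pi_1Y_i] < \infty$, so this quotient is a finite group. For an arbitrary pyramidal cell $P$, note that $P$ is the join of a cube of the base $Im_{\hat X}(Y_i)$ with the unique apex $v$; any combinatorial automorphism stabilising $P$ must fix its apex, hence $Stab_{\pi_1 X^*}(P) \leq Stab_{\pi_1 X^*}(v)$, which is finite as just shown.

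There is no real obstacle: once Construction~\ref{def:reduction} has been unpacked, the argument is essentially bookkeeping on the two types of cells and a direct appeal to Lemmas~\ref{lem:stabs} and~\ref{lem:symm}. The only point requiring care is the verification that $\hat X$ embeds into $\bar X^*$ and that $\pi_1 X^*$ acts compatibly, which I would state explicitly before invoking freeness of the action on $\widetilde{X^*}$.
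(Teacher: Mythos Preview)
Your proposal is correct and follows essentially the same approach as the paper: split into cubical cells (trivial stabiliser by freeness on $\widetilde{X^*}$) and cone-cells (finite stabiliser via Lemma~\ref{lem:stabs} and the finite-index condition from symmetry). Your write-up is simply more explicit than the paper's two-sentence proof---in particular you spell out why $\hat X$ embeds in $\bar X^*$ and why pyramidal cells have stabiliser contained in that of their apex, both of which the paper leaves implicit.
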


\begin{proof}
All stabilisers of cells in $\hat X \subset \bar X^*$ are trivial; since the cubical presentation is symmetric, then $[Stab_{\pi_1X}(\widetilde Y):\pi_1Y]< \infty$, so Lemma~\ref{lem:stabs} implies that the stabilisers of cone-vertices are finite.
\end{proof}

So far we have shown that if $X^*=\langle X | \{Y_i\} \rangle$ satisfies the $C(9)$ condition, then  $\bar X^*$ is contractible, and that if, in addition, $X^*$ is symmetric, then the action of $\pi_1X^*$ on $\bar X^*$  has finite cell-stabilisers. To show that $\bar X^*$ is a classifying space for proper actions for $\pi_1X^*$, we still must prove that every finite subgroup $H <\pi_1X^*$ fixes a point in $\bar X^*$, and that this point is unique. 
Note that since $\bar X^*$ is aspherical and finite-dimensional, then the action of a finite subgroup $H$ on $\bar X^*$ cannot be free, as otherwise the quotient $\bar X^*/H$ would be a classifying space for $H$. Since we will invoke it later on, we record this observation as a lemma:

\begin{lemma}\label{lem:nonfree}
Let $Z$ be an aspherical, finite-dimensional cell-complex and let $G$ be a group acting on $Z$ by combinatorial isometries. If $H < G$ is finite, then  the action of $H$ on $Z$ is not free.
\end{lemma}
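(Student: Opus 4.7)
The plan is to argue by contradiction, reducing to the standard fact that non-trivial finite groups have infinite cohomological dimension. Suppose $H<G$ is a non-trivial finite subgroup acting freely on $Z$. Since $H$ is finite and acts cellularly (combinatorial isometries map cells to cells), the action is automatically properly discontinuous, so the projection $Z\to Z/H$ is a covering map, and $Z/H$ inherits a CW-structure with $\dim(Z/H)\leq \dim(Z)<\infty$.

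Next I would verify that $Z/H$ is itself aspherical. Because $Z\to Z/H$ is a covering map, the higher homotopy groups agree: $\pi_n(Z/H)\cong\pi_n(Z)=0$ for $n\geq 2$, using that $Z$ is aspherical. Hence $Z/H$ is a $K(\Gamma,1)$ with $\Gamma:=\pi_1(Z/H)$. The long exact sequence of the covering yields a short exact sequence
\[
1\longrightarrow \pi_1(Z)\longrightarrow \Gamma \longrightarrow H \longrightarrow 1,
\]
so $H$ embeds as a subgroup of $\Gamma$.

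The key step is to extract the contradiction from finite cohomological dimension. Since $Z/H$ is a finite-dimensional $K(\Gamma,1)$, one has $\mathrm{cd}(\Gamma)\leq \dim(Z/H)<\infty$. Cohomological dimension is inherited by subgroups (see, e.g., Brown, \emph{Cohomology of Groups}, Ch.~VIII), so $\mathrm{cd}(H)\leq \mathrm{cd}(\Gamma)<\infty$. On the other hand, any non-trivial finite group contains a cyclic subgroup $\mathbb{Z}/p$, and $H^n(\mathbb{Z}/p;\mathbb{F}_p)\neq 0$ for every $n\geq 0$, so $\mathrm{cd}(\mathbb{Z}/p)=\infty$. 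This contradiction shows that no such $H$ can act freely.

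The main obstacle, such as it is, lies in verifying that the action on $Z$ descends to a CW-structure on $Z/H$ of the same dimension and that the covering-space formalism applies in the combinatorial setting; both follow routinely from freeness, cellularity, and finiteness of $H$. Everything else is a direct appeal to standard facts about cohomological dimension, so the argument is essentially formal once these ingredients are in place.
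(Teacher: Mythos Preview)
Your argument has a genuine error where you conclude that ``$H$ embeds as a subgroup of $\Gamma$''. The short exact sequence $1 \to \pi_1(Z) \to \Gamma \to H \to 1$ exhibits $H$ as a \emph{quotient} of $\Gamma$, not a subgroup, and cohomological dimension is not inherited by quotients (for instance, $\mathbb{Z}/2$ is a quotient of $\mathbb{Z}$). This is not a repairable slip: the lemma as literally stated is false. Take $Z = S^1$ with the cell structure having two vertices and two edges, and let $H = \mathbb{Z}/2$ act by the rotation through $\pi$. Then $Z$ is aspherical and $1$-dimensional, the action is by combinatorial isometries, and yet a nontrivial finite group acts freely.

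The paper's own one-line justification (the sentence preceding the lemma, that the quotient ``would be a classifying space for $H$'') tacitly uses that the space in question is simply connected --- in the intended application $\bar X^*$ is simply connected by Lemma~\ref{lem:scbar}, hence contractible. Under that extra hypothesis $\pi_1(Z)=1$, so $\Gamma \cong H$ and your argument goes through and coincides with the paper's: $Z/H$ is then a finite-dimensional $K(H,1)$, forcing $\mathrm{cd}(H)<\infty$, which is impossible for nontrivial finite $H$.
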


We emphasize that this does not necessarily imply that such an action has a global fixed point.

\begin{lemma}\label{lem:iffix}
If  a non-trivial subgroup $H <\pi_1X^*$ fixes a point in $\bar X^*$, then $H$ is conjugate into $Stab_{\pi_1X^*}(ImY_i)$ for some elevation $Y_i\rightarrow \hat X$ of some $Y_i \rightarrow X$.
\end{lemma}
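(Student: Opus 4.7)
The plan is to argue that $H$ cannot fix a point of the cubical part $\hat X \subset \bar X^*$, so the fixed point must lie in the interior of some cone, where $H$ will then be forced to fix the apex. The key observation is that $\pi_1 X^*$ acts freely on $\hat X$. Indeed, by Construction~\ref{def:reduction} the cover $\hat X \to X$ is the regular cover associated to $\ker(\pi_1 X \twoheadrightarrow \pi_1 X^*)$, and since $\pi_1 X \to \pi_1 X^*$ is surjective the deck group equals $\pi_1 X^*$, acting freely. Moreover, the quotient $\widetilde{X^*} \to \bar X^*$ identifies only cones and leaves $\hat X$ untouched, so $\hat X$ embeds $\pi_1 X^*$-equivariantly into $\bar X^*$ and no non-trivial $H$ can fix a point of $\hat X$.

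Consequently, the fixed point $p$ lies in $\bar X^* \setminus \hat X$, which decomposes as a disjoint union of open cones $C \setminus Im_{\hat X}(Y_i)$. There is therefore a unique cone $C = C(Im_{\hat X}(Y_i))$ of $\bar X^*$, for some elevation $Y_i \to \hat X$ of some relator, whose interior contains $p$. Since $\pi_1 X^*$ permutes the cones of $\bar X^*$ and $h \cdot p = p$ must lie in the interior of $hC$, the disjointness of cone interiors forces $hC = C$ for every $h \in H$. The apex $v_C$ is the unique vertex of $C$ lying outside the $\pi_1 X^*$-invariant subcomplex $\hat X$, and hence the cellular action of $H$ on $C$ fixes $v_C$.

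Applying Lemma~\ref{lem:stabs}, $Stab_{\pi_1 X^*}(v_C) = Stab_{\pi_1 X}(\widetilde{Y}_i)/\pi_1 Y_i$. Any element stabilizing $C$ must also stabilize the base $Im Y_i = C \cap \hat X$, so
\[
H \leq Stab_{\pi_1 X^*}(v_C) \leq Stab_{\pi_1 X^*}(Im Y_i)
\]
for this elevation $Y_i \to \hat X$, which is the desired conclusion (the conjugating element can be taken to be trivial, since the elevation is chosen to match $C$).

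The main obstacle is the combinatorial step identifying $v_C$ as an $H$-fixed vertex of the cone $C$; this relies crucially on the $\pi_1 X^*$-invariance of $\hat X$ distinguishing $v_C$ as the unique non-cubical vertex of $C$. The rest of the argument is a direct consequence of the freeness of the deck action on $\hat X$ together with Lemma~\ref{lem:stabs}.
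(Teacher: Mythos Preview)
Your proof is correct and follows essentially the same approach as the paper. Both arguments observe that the action on $\hat X$ is free (as a deck-transformation action), so any fixed point must lie in the cone part, and then identify the stabilizer of the relevant cone with a stabilizer of an elevation. Your version is actually slightly more careful than the paper's in one respect: the paper asserts directly that the fixed point ``must be a cone-vertex,'' whereas you allow the fixed point to be anywhere in the open cone and then deduce that the apex is fixed from the uniqueness of the non-cubical vertex. At the end, the paper phrases things via cosets (the cone-vertex corresponds to $gStab_{\pi_1X^*}(Y_i)$, giving a conjugate), while you choose the elevation to match the fixed cone so the conjugator is trivial; these are equivalent.

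One small point: your appeal to Lemma~\ref{lem:stabs} is unnecessary and, strictly speaking, imports a symmetry hypothesis that the present lemma does not carry. You don't actually use the identification $Stab_{\pi_1 X^*}(v_C) = Stab_{\pi_1X}(\widetilde Y_i)/\pi_1 Y_i$ anywhere; your own observation that $Im Y_i = C \cap \hat X$ already gives $Stab_{\pi_1 X^*}(v_C) \leq Stab_{\pi_1 X^*}(Im Y_i)$ directly. You can simply delete that sentence.
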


\begin{proof}
Let  $H$ be a non-trivial subgroup of $\pi_1X^*$. Since the action of $H$ on $\widetilde{X^*}$ is free and $\widetilde{X^*}$ coincides with $\bar X^*$ outside of the cones, then a fixed point $\xi$ under the action of $H$ on $\bar X^*$ must be a cone-vertex, which in turn, by Construction~\ref{const:covers}, corresponds to some left coset  $gStab_{\pi_1X^*}(Y_i)$ for some elevation $Y_i \rightarrow \hat X$ of some $Y_i \rightarrow X$ and some $g \in \pi_1X^*$. 
Thus, $HgStab_{\pi_1X^*}(Y_i)=gStab_{\pi_1X^*}(Y_i)$, so $g^{-1}HgStab_{\pi_1X^*}(Y_i)=Stab_{\pi_1X^*}(Y_i)$ and $H$ is conjugate into $Stab_{\pi_1X^*}(Y_i)$. 
\end{proof}

\begin{lemma}\label{lem:contfix}
If a non-trivial subgroup $H <\pi_1X^*$ fixes a point in $\bar X^*$, then that point is unique, and in particular the fixed-point space $(\bar X^*)^H$ is contractible.
\end{lemma}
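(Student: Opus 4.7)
My plan is first to reduce the claim to the uniqueness of a fixed cone-vertex, and then to establish uniqueness by a disc-diagram argument invoking the $C(9)$ condition.

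For the reduction, note that $\pi_1X^*$ acts on $\hat X\subset\bar X^*$ as the restriction of the covering action on $\widetilde{X^*}$ and is in particular free. Hence no non-trivial finite subgroup $H$ preserves any cube of $\hat X$ (a combinatorial finite-order action on a cube has a barycentric fixed vertex). On a cone $C(Y_i)$ whose apex $v_i$ is $H$-fixed, the $H$-action is the radial extension of its action on $Y_i\subset\hat X$, which is free; so the only $H$-fixed point in $C(Y_i)$ is $v_i$ itself. Therefore $(\bar X^*)^H$ is a discrete collection of cone-vertices, and its contractibility is equivalent to uniqueness.

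For uniqueness, I would assume $H$ fixes two distinct cone-vertices $v_1,v_2$, sitting over distinct subspaces $Y_1\neq Y_2\subset\hat X$, and pick a non-trivial $h\in H$. Choose a shortest combinatorial edge-path $\gamma$ from $v_1$ to $v_2$ in $\bar X^*$. Since $h$ fixes both endpoints but acts freely on the interior vertices of $\gamma\cap\hat X$ (such interior cubical vertices must exist as no single edge connects two cone-vertices), the translate $h\gamma$ is a distinct shortest path from $v_1$ to $v_2$, so $\gamma(h\gamma)^{-1}$ is an essential loop at $v_1$. By Lemma~\ref{lem:scbar} together with the Van-Kampen Lemma (Theorem~\ref{thm:VK}), this loop bounds a disc diagram $D\to\bar X^*$, which I would pick of minimal complexity among all such choices of $\gamma$ and $D$.

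Applying Diagram Dichotomy (Theorem~\ref{thm:tric}) to $D$: since $\partial D$ passes through $v_1$ and $v_2$, $D$ is not a square-disc diagram, and $D$ is not a single cone-cell, because a single cone-cell has boundary in a single $Y$ and so cannot have both of the distinct cone-vertices $v_1,v_2$ on its boundary. Hence $D$ has at least two shells, corners, and/or spurs on $\partial D$. Spurs produce backtracks in $\gamma$ or $h\gamma$, contradicting their minimal length. Corners adjacent to a cone-cell can be absorbed into it via hexagon moves, contradicting the minimal complexity of $D$. Shells whose outerpath lies entirely on $\gamma$ or $h\gamma$ allow one to shortcut $\gamma$ through the corresponding $Y$, contradicting the minimal length of $\gamma$; alternatively, the $C(9)$ bound on the innerpath yields an essential loop in $Y$ that decomposes as fewer than $9$ pieces, a direct contradiction.

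The main obstacle will be the delicate case in which the outerpath of a shell abuts one of the cone-vertices $v_1,v_2$ and contains a cone-edge. Such a shell is a cone-cell attached at a cone-vertex, and one must argue that either two cone-cells at $v_1$ (or $v_2$) become combinable, contradicting Condition~\ref{it:r6} of Definition~\ref{def:reduced} and the minimality of $D$; or that the innerpath, as a concatenation of at most four pieces, together with the cone-edge, produces an essential closed path in $Y_1$ comprised of fewer than nine pieces, violating $C(9)$. This case-analysis is in the same spirit as that in the proof of Theorem~\ref{thm:cub2}, and will be the technical heart of the argument.
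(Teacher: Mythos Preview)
Your approach differs substantially from the paper's. The paper proceeds algebraically: by Lemma~\ref{lem:iffix}, uniqueness of the fixed cone-vertex is equivalent to malnormality of the collection $\{Stab_{\pi_1X^*}(Y_i)\}$ in $\pi_1X^*$, and the paper proves this in two steps. First, it uses the \emph{symmetry} hypothesis (so $[\,Stab_{\pi_1X}(\widetilde Y_i):\pi_1Y_i\,]<\infty$) together with Proposition~\ref{prop:boundedpiece} --- which needs compactness of the $Y_i$ and finiteness of $I$ --- to get malnormality of $\{Stab_{\pi_1X}(\widetilde Y_i)\}$ already in $\pi_1X$. Second, it promotes this to $\pi_1X^*$ by assembling a very specific diagram $F=D_i\cup_{\sigma_i}R_i\cup_{\sigma'_i}D\cup_{\sigma'_j}R_j\cup_{\sigma_j}D_j$ in which $D_i,D_j$ are single cone-cells and $R_i,R_j$ are grids; this engineered boundary forces the outerpath of any shell of $D$ to be a single wall-piece (coming from the grid) or cone-piece (coming from $D_i$ or $D_j$), so the shell carries at most $6$ pieces on its boundary and $C(9)$ fires.

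Your geometric route, by contrast, never invokes symmetry or compactness, and that is a warning sign: if it worked as written it would establish malnormality of the cone-vertex stabilisers from $C(9)$ alone. The concrete place it breaks is the shell analysis. Take a shell $S$ mapping to some $Y_j$ whose outerpath $Q$ lies on $\gamma$. Your shortcut observation (reroute through the cone-vertex $v_j$) only yields $|Q|\le 2$; it does not exclude the shell. And ``$|Q|\le 2$'' is a \emph{length} bound, not a \emph{piece} bound: a one- or two-edge subpath of $Y_j$ sitting on a bare geodesic of $\bar X^*$ need not be a piece in the sense of Definition~\ref{def:pieces}, so you cannot add it to the $\le 4$ innerpath pieces and declare a $C(9)$ violation. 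This is precisely what the paper's grids $R_i,R_j$ are designed to supply --- they make the outerpath a genuine wall-piece --- and without an analogous device your diagram has no control over $Q$. The ``delicate case'' you flag at the endpoints inherits the same difficulty; it is not merely a matter of combinability or of appending a cone-edge, and you should not expect it to close without importing the finite-index information that symmetry provides.
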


Before proving the lemma, we need an auxiliary result.

\begin{proposition}\label{prop:boundedpiece}
Let $X^*=\langle X | \{Y_i\}^k_{i=1} \rangle$ be a cubical presentation that satisfies the $C(n)$ condition for some $n \geq 2$, and where each $Y_i$ is compact. Then there is a uniform upper-bound $\mathcal{L} \geq 0$ on the size of pieces in $\widetilde{X^*}$.
\end{proposition}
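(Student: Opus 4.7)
The plan is a contradiction argument relying only on the $C(2)$ condition (which is implied by $C(n)$ for every $n \geq 2$). First I would set $N := \max_{1 \leq i \leq k} |V(Y_i)|$, which is finite because there are finitely many $Y_i$ and each is compact, and then show that every abstract contiguous piece has combinatorial diameter at most $N$. Since any piece is by definition a combinatorial path in an abstract contiguous piece, this diameter bound yields the desired uniform bound $\mathcal{L} = N$ on reduced pieces.

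For cone-pieces, let $P = \widetilde{Y}_j \cap \widetilde{Y}_i$ be an abstract contiguous cone-piece. Since each $Y_i \to X$ is a local isometry of non-positively curved cube complexes, each $\widetilde{Y}_i$ embeds as a convex subcomplex of $\widetilde{X}$, and so $P$ is convex as an intersection of convex subcomplexes. Suppose for contradiction that $P$ contains two vertices at combinatorial distance $L > N$, and let $\gamma$ be a combinatorial geodesic of length $L$ connecting them inside $P$. Projecting via $P \hookrightarrow \widetilde{Y}_i \to Y_i$, the $L+1$ vertices of $\gamma$ have at most $|V(Y_i)| \leq N$ distinct images, so by pigeonhole two vertices $v_s, v_t$ of $\gamma$ share the same image in $Y_i$. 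Because $\gamma$ is a geodesic in the CAT(0) cube complex $\widetilde{Y}_i$, its vertices are pairwise distinct, so $v_s$ and $v_t$ differ by a nontrivial deck transformation $h \in \pi_1 Y_i$. The subpath of $\gamma$ from $v_s$ to $v_t$ therefore projects to a closed loop $\sigma$ in $Y_i$ representing $h \neq 1$, so $\sigma$ is essential; since this subpath lies inside $P$, the loop $\sigma$ is a single cone-piece of $Y_j$ in $Y_i$, violating $C(2)$.

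The argument for wall-pieces is identical, applied to $P = N(H) \cap \widetilde{Y}_i$ for a hyperplane $H$ disjoint from $\widetilde{Y}_i$. The carrier $N(H)$ is convex in $\widetilde{X}$, so $P$ is again convex, and a geodesic of length exceeding $N$ produces an essential closed wall-piece in $Y_i$ via pigeonhole, once more contradicting $C(2)$.

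The one delicate point is ensuring that the deck transformation $h$ obtained from pigeonhole is nontrivial, which forces us to work with a geodesic rather than an arbitrary combinatorial path so that distinct vertices of $\gamma$ remain distinct in $\widetilde{Y}_i$. Once this is granted, convexity of the abstract contiguous pieces, together with compactness and finiteness of the $Y_i$, deliver the uniform bound $\mathcal{L} = N$ simultaneously for cone- and wall-pieces.
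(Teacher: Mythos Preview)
Your argument is correct. Both your proof and the paper's reduce to exhibiting an essential closed path in some $Y_i$ that is covered by a single piece, contradicting $C(2)$; the difference lies in how that loop is produced. The paper argues algebraically: it fixes minimal-length loops representing generators of each $\pi_1 Y_i$, takes $\mathcal{L}$ to be the maximum of their lengths, and observes that a piece longer than (roughly) $\mathcal{L}$ occurring in a closed path written as a product of such generator-loops must swallow one of them whole. You instead argue geometrically: a geodesic in the convex intersection $P$ of length exceeding $\max_i |V(Y_i)|$ forces, by pigeonhole on the covering map $\widetilde{Y}_i \to Y_i$, two distinct vertices with the same image, hence a nontrivial deck transformation and an essential loop lying entirely in $P$. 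Your route is more direct, gives an explicit constant, and what it actually proves—the uniform bound on the \emph{diameter} of abstract contiguous pieces—is precisely the form in which the proposition is invoked later (to rule out an unbounded axis in $\widetilde{Y}_i \cap \widetilde{Y}_j$).

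One small point of phrasing: your conclusion bounds geodesic pieces (equivalently, the diameter of $P$), not arbitrary combinatorial paths in $P$, which can of course be made arbitrarily long. Your caveat ``on reduced pieces'' gestures at this, and it is harmless since only the diameter bound is ever used downstream; but it would be cleaner to state the conclusion as a diameter bound on abstract contiguous pieces rather than as a length bound on pieces.
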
 

\begin{proof}
Since each $Y_i$ is compact, $\pi_1Y_i$ is finitely generated for each $i \in \{1, \ldots, k\}$, and since the cubical presentation has finitely many relations $Y_i \rightarrow X$, then finitely many elements of $\pi_1X$ suffice to generate all  the $\pi_1Y_i$'s. The compactness of the $Y_i$'s implies additionally that there is an uniform upper-bound $\ell_i$ for the length of minimal-length paths representing the generators of $\pi_1Y_i$, thus, there is a bound $\mathcal{L}= \max\{\ell_i\}$ for the size of pieces arising in generators of $\pi_1Y_1, \ldots, \pi_1Y_k$. Since $X^*$ satisfies the $C(n)$ condition for $n \geq 2$, then each path representing a generator is a concatenation of at least $2$ pieces, and in particular any such piece $p$  satisfies $|p|< \ell_i \leq \mathcal{L}$ whenever it arises in a generator of a $\pi_1Y_i$.

Assume that there is an essential closed path $\sigma \rightarrow Y_i$  that is a concatenation of pieces having a piece $p_r$ with $|p_r| > \mathcal{L}$. By the discussion above, we can assume in particular that $\sigma$ does not represents a generator of a $\pi_1Y_{i'}$ for any $i' \in \{1, \ldots, k\}$.
Note that any expression for $\sigma$ as a concatenation of pieces can be further expressed as a concatenation of pieces in generators of $\pi_1Y_i$. Write $\sigma=\alpha_1 \cdots \alpha_m=p_1 \cdots p_r   \cdots p_n$, where each $\alpha_{j}$ is a (not necessarily distinct) generator of $\pi_1Y_i$. Since $|p_r| > \mathcal{L}$, then there is an $\alpha_{j}$ with $j \in \{1, \ldots, m\}$ for which  $p_r \cap \alpha_{j}= \alpha_{j}$. Thus, $\alpha_{j}$ can be expressed as a single piece $q_r \subset p_r$, contradicting the discussion in the previous paragraph, and hence the $C(n)$ condition.
\end{proof}

The proof of Lemma~\ref{lem:contfix} now combines Proposition~\ref{prop:boundedpiece} with the hypothesised symmetry of the cubical presentation under consideration.

\begin{proof}[Proof of Lemma~\ref{lem:contfix}]
Let $H<\pi_1X^*$. Assume $\xi \neq \xi'$ are cone-vertices of $\bar X^*$ fixed by $H$, so $H\xi=\xi$ and $H\xi'=\xi'$. Then Lemma~\ref{lem:iffix} implies that $H \subset g^{-1}Stab_{\pi_1X^*}(Y_i)g \cap g'^{-1} Stab_{\pi_1X^*}(Y_{i'})g'$ for some $i, i' \in  \{1, \ldots, k\}$ and $g, g' \in \pi_1 X^*$. We claim that the collection $\{Stab_{\pi_1X^*}(Y_i)\}$ is malnormal, and thus $H$ is the trivial subgroup. To this end, we first show that the collection $\{Stab_{\pi_1X}(\widetilde Y_i)\}$ is malnormal in $\pi_1X$.

Suppose that the intersection $Stab_{\pi_1X}(\widetilde Y_i)^{\tilde g} \cap Stab_{\pi_1X}(\widetilde Y_j)^{\tilde g'}$ is infinite for some $i, i' \in  \{1, \ldots, k\}$ and $\tilde g, \tilde g' \in \pi_1 X$. Since $X^*$ is symmetric, then $[Stab_{\pi_1X}(\widetilde Y_i):\pi_1Y_i]< \infty$ and $[Stab_{\pi_1X}(\widetilde Y_j):\pi_1Y_j]< \infty$, so $\pi_1 Y_i^{\tilde g} \cap \pi_1 Y_j^{\tilde g'}$ has finite index in $Stab_{\pi_1X}(\widetilde Y_i)^{\tilde g} \cap Stab_{\pi_1X}(\widetilde Y_j)^{\tilde g'}$. Thus, $\pi_1 Y_i^{\tilde g} \cap \pi_1 Y_j^{\tilde g'}$ is infinite, and in particular contains an infinite order element $\tilde h$. Therefore, the axis of $\tilde h$ in $\widetilde X$ is an unbounded piece between elevations $\widetilde Y_i$ and $\widetilde Y_j$ of $Y_i$ and $Y_j$, contradicting Proposition~\ref{prop:boundedpiece}, and thus the $C(9)$ condition. 
We conclude that $Stab_{\pi_1X}(\widetilde Y_i)^{\tilde g} \cap Stab_{\pi_1X}(\widetilde Y_j)^{\tilde g'}$ is finite; since $\pi_1 X$ is torsion-free, then in fact $Stab_{\pi_1X}(\widetilde Y_i)^{\tilde g} \cap Stab_{\pi_1X}(\widetilde Y_j)^{\tilde g'}$ must be trivial, so the collection $\{Stab_{\pi_1X}(\widetilde Y_i)\}$ is malnormal in $\pi_1X$. 

To promote this to malnormality of the collection $\{Stab_{\pi_1X^*}(Y_i)\}$ in $\pi_1X^*$, let $h \in Stab_{\pi_1X^*}( Y_i)^g \cap Stab_{\pi_1X^*}( Y_j)^{g'}$, and let $\tilde{h_i} \in Stab_{\pi_1X}(\widetilde Y_i)^{\tilde g}$ and $\tilde{h_j} \in Stab_{\pi_1X}(\widetilde Y_j)^{\tilde g'}$ be preimages of $h$, so $\tilde{h_i}\tilde{h_j}^{-1}$ is an element of $\pi_1 \hat X$.
Since $\tilde{h_i} \in Stab_{\pi_1X}(\widetilde Y_i)^{\tilde g}$ and $\tilde{h_j} \in Stab_{\pi_1X}(\widetilde Y_j)^{\tilde g'}$ and as mentioned before, by symmetry,  $[Stab_{\pi_1X}(\widetilde Y_i):\pi_1Y_i]< \infty$ and $[Stab_{\pi_1X}(\widetilde Y_j):\pi_1Y_j]< \infty$, then there are paths $\sigma_i \rightarrow Y_i, \sigma_j \rightarrow Y_j$ such that for some $k,k' \in \naturals$, the paths $\sigma_i^k, \sigma_j^{k'}$ bound disc diagrams $D_i \rightarrow Y_i,D_j \rightarrow Y_j$ in $\widetilde {X^*}$, each of which can be assumed to have minimal complexity, and therefore consisting of a single cone-cell. 
Moreover, by the discussion above, $\sigma_i$ and $\sigma_j$ represent lifts of conjugates of $\tilde{h_i}$ and $\tilde{h_j}$, so there are paths $\sigma'_i, \sigma'_j$  such that $\sigma'_i\sigma_j^{'-1}$ is a closed path that represents $\tilde{h_i}\tilde{h_j}^{-1}$. Thus, $\sigma'_i\sigma_j^{'-1}$ bounds a disc diagram $D$ in $\widetilde {X^*}$, and there are paths $\ell_i, \ell_i', \ell_j, \ell_j'$ that are lifts of the corresponding conjugating elements, so that $\sigma_i\ell_i\sigma'_i\ell'_i$ and $\sigma_j\ell_j\sigma'_j\ell'_j$ bound square-disc diagrams $R_i, R_j$ in $\widetilde {X^*}$. 
We can further assume that   $R_i,R_j$ and $D$ are chosen to have minimal area and minimal complexity amongst all possible diagrams with the same corresponding boundary.  For $R_i$ and $R_j$, this means in particular that there are no cornsquares on any of $\sigma_i, \sigma'_i, \sigma_j, \sigma'_j$, so each of these paths determines a hyperplane carrier in the corresponding diagram $R_i$ or $R_j$, and more generally that $R_i$ and $R_j$ are \emph{grids}, in the sense that they are cubically isomorphic to a product of combinatorial intervals $I_n \times I_m$ for suitable $m,n \in \naturals$. 
We note that, of course, $R_i$ or $R_j$ could be degenerate square diagrams, in which case $\sigma_i=\sigma'_i$ or $\sigma_j= \sigma'_j$, and the argument in the next paragraph becomes a bit simpler.

If $D$ is a square disc diagram, then $\sigma'_1$ and $\sigma'_j$ are homotopic rel their endpoints in $\widetilde{X^*}$, so  $\tilde{h_i}=\tilde{h_j}$, contradicting malnormality in $\pi_1X$. We claim that this is the only possibility for $D$.
Assume towards a contradiction that $D$  contains at least one cone-cell, and observe that since any corners or spurs on $\partial D$ could be removed, improving the choices made in the previous paragraph, that cone-cell must be a shell $S$. Consider the disc diagram $F=D_i\cup_{\sigma_i}R_i \cup_{\sigma'_i}D\cup_{\sigma'_j}R_j\cup_{\sigma_j}D_j$. 
Then, since $S$ is a shell of $D$, its innerpath is a concatenation of at most $4$ pieces, and either the outerpath of $S$  is a subpath of $\sigma'_i \subset R_i$, or a subpath of $\sigma'_j \subset  R_j$, or a subpath of their concatenation. In the first case, since $\partial S\cap R_i$ in $F$ is the outerpath of $S$ in $D$, then $\partial S\cap R_i$ is a single piece (a wall piece if $R_i$ is not a degenerate diagram, and a cone piece otherwise), so $\partial S$ is the concatenation of at most $5$ pieces, contradicting the $C(9)$ condition. Similarly if the outerpath of $S$ is a subpath of $\sigma'_j$. 
In the last case, $\partial S$ is the concatenation of at most $6$ pieces: the $4$ pieces coming from its innerpath and a piece coming from each of $\sigma'_i, \sigma'_j$, contradicting again the $C(9)$ condition. 

So $\{Stab_{\pi_1X^*}(Y_i)\}$ is malnormal in $\pi_1X^*$, and either $H=\{1\}$, or $\xi=\xi'$.
\end{proof}

\begin{remark} In the proof above, we use that $\pi_1 X$ is torsion-free, which follows trivially from the assumption that $X$ is finite-dimensional, since $X$ is aspherical, but is true also for infinite dimensional non-positively curved cube complexes, and is a consequence of CAT(0) geometry~\cite{BridsonHaefliger, Leary_KanThurston}.
\end{remark}
  
We can finally conclude that, in fact, the finite subgroups of $\pi_1X^*$ have non-empty fixed-point sets under their action on $\bar X^*$:  
  
\begin{corollary}\label{cor:fix}
Every finite subgroup $H <\pi_1X^*$ fixes a point in $\bar X^*$.
\end{corollary}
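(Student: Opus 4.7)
The plan is to derive the corollary from the earlier lemmas via a counting argument based on the malnormality of the cone-vertex stabilisers established inside the proof of Lemma~\ref{lem:contfix}. Fix a non-trivial finite subgroup $H<\pi_1X^*$.

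First I would verify that every non-identity $h\in H$ fixes a unique cone-vertex $\xi_h\in \bar X^*$. Applying Lemma~\ref{lem:nonfree} to $\langle h\rangle$ acting on the aspherical, finite-dimensional complex $\bar X^*$ (aspherical by Corollary~\ref{cor:lowdim}) produces some power $h^k\neq 1$ and a point $\xi$ with $h^k\xi=\xi$. Because $h$ commutes with $h^k$, the point $h\xi$ is also fixed by $h^k$, and Lemma~\ref{lem:contfix} applied to $\langle h^k\rangle$ forces $h\xi=\xi$. A further application of Lemma~\ref{lem:contfix} to $\langle h\rangle$ shows that this point $\xi=:\xi_h$ is the unique fixed point of $h$, and Lemma~\ref{lem:iffix} identifies it as a cone-vertex whose stabiliser $K_{\xi_h}:=Stab_{\pi_1X^*}(\xi_h)$ is a conjugate of some $Stab_{\pi_1X^*}(Y_i)$.

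Next I would exploit malnormality. Let $F$ be the image of the map $h\mapsto \xi_h$ on $H\setminus\{1\}$. The proof of Lemma~\ref{lem:contfix} establishes that the family $\{Stab_{\pi_1X^*}(Y_i)\}$ is malnormal in $\pi_1X^*$, and hence $K_\xi\cap K_{\xi'}=\{1\}$ whenever $\xi\neq\xi'$ both lie in $F$. Combined with the fact that each non-trivial $h\in H$ lies in $K_{\xi_h}$, this gives the disjoint-union decomposition
\[
H\setminus\{1\}\;=\;\bigsqcup_{\xi\in F}\bigl((H\cap K_\xi)\setminus\{1\}\bigr).
\]
The relation $g\cdot\xi_h=\xi_{ghg^{-1}}$ shows that $H$ permutes $F$, with $H\cap K_\xi$ as the point-stabiliser of $\xi$, so by orbit-stabiliser each $H$-orbit on $F$ consisting of points conjugate to $\xi$ has size $|H|/|H\cap K_\xi|$.

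Choosing orbit representatives $\xi_1,\ldots,\xi_r$ and setting $m_i:=|H\cap K_{\xi_i}|\ge 2$, the partition rewrites as
\[
|H|-1\;=\;\sum_{i=1}^r \frac{|H|}{m_i}(m_i-1),
\]
equivalently $\sum_{i=1}^r 1/m_i = r-1+1/|H|$. Since each $m_i\ge 2$, this forces $r/2\ge r-1+1/|H|$, i.e., $r\le 2-2/|H|<2$, so $r=1$ and $m_1=|H|$. Hence $H\subseteq K_{\xi_1}$ and $H$ fixes $\xi_1$. I expect the only delicate step to be the first one, promoting the fixed point of some power $h^k$ to a fixed point of $h$ itself and verifying uniqueness; once that is in place, the malnormality-plus-counting argument is purely arithmetic.
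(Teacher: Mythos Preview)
Your proof is correct and follows essentially the same strategy as the paper's: first promote the fixed point of a power $h^k$ to a fixed point of $h$ via commutation and Lemma~\ref{lem:contfix}, then run a counting argument on the resulting finite set of fixed cone-vertices to produce a global fixed point. The paper phrases the count via Burnside's lemma (summing over group elements) where you sum over orbits of cone-vertices, but the two computations are the same double-counting argument in different clothing.
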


\begin{proof}
Let $H$ be a finite subgroup of  $\pi_1X^*$. If $H$ is trivial, then it fixes all of $\bar X^*$. Otherwise, since the action of $H$ on $\bar X^*$ cannot be free by Lemma~\ref{lem:nonfree}, for any non-trivial $h \in H$, there exists $k \in \naturals$ for which $h^k$ fixes a point in $\bar X^*$. This implies that each cyclic subgroup $\langle h \rangle$ fixes a point, and in particular that every $h \in H$ fixes a point in $\bar X^*$. Indeed, if $h^kv=v$ for some $k \in \naturals$ and $v \in \bar X^*$, then $hv=h(h^kv)=h^k(hv)$, so $h^k$ fixes $hv$ and by Lemma~\ref{lem:contfix} $hv=v$.

To see now that every element of $H$ fixes the same point, we use Burnside's Lemma: since $H$ is a finite group acting on a finite set $S$ (the union of the orbits of the fixed points of non-trivial elements), and each non-trivial element fixes a unique point, if $m$ is the number of orbits of elements, then $ m= \frac{|S|+|H|-1}{|H|}$, so $|S|=(m-1)|H|+1$ and in particular, as the size of each orbit must divide $|H|$, at most one orbit of points can have size $<|H|$. There must therefore be $m-1$ orbits of size $|H|$, and a single orbit of size $1$, yielding a fixed point in $\bar X^*$.
\end{proof}

Putting together the previous results, we obtain:

\begin{theorem}\label{thm:B}
Let $X^*=\langle X | \{Y_i\}^k_{i=1} \rangle$ be a symmetric cubical presentation that satisfies the $C(9)$ condition. Let $\pi_1 X^*=G$. If  $dim(X)\leq 2$ and each $Y_i$ is homotopy equivalent to a graph, then there is a quotient $\bar X^*$ of $\widetilde{X^*}$ that is an $\underbar EG$, so $cd_\rationals (G) \leq dim(\bar X^*) \leq 2$. 
\end{theorem}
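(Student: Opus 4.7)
The plan is to verify directly that $\bar X^*$ satisfies the two defining axioms of a classifying space for proper actions for $G = \pi_1 X^*$, assembling the groundwork laid in the preceding lemmas. Corollary~\ref{cor:lowdim} already yields that $\bar X^*$ is contractible under these hypotheses, and the proof of Lemma~\ref{lem:stabs} shows that the natural $\pi_1 X^*$-action on $\widetilde{X^*}$ descends through Construction~\ref{def:reduction} to a $G$-CW-structure on $\bar X^*$, since $\pi_1 X^*$ permutes cones in $\widetilde{X^*}$ in a manner respecting the identifications defining $\bar X^*$.

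Axiom (1), finiteness of cell-stabilisers, is precisely the content of Lemma~\ref{lem:finitestab}; the symmetry hypothesis enters here through the finite-index bound $[Stab_{\pi_1 X}(\widetilde Y_i) : \pi_1 Y_i] < \infty$ given by Lemma~\ref{lem:symm}. Axiom (2), contractibility of $(\bar X^*)^H$ for each finite subgroup $H < G$, splits cleanly: when $H = \{1\}$, the fixed-point set is all of $\bar X^*$, contractible by the above; when $H$ is a nontrivial finite subgroup, Corollary~\ref{cor:fix} furnishes a fixed point, and Lemma~\ref{lem:contfix} shows that any such fixed point is unique, so $(\bar X^*)^H$ is a single cone-vertex, hence contractible.

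For the dimension estimate, $\dim(\bar X^*) \leq 2$ follows from $\dim(X) \leq 2$ combined with the hypothesis that each $Y_i$ is homotopy equivalent to a graph: one may assume (up to $G$-homotopy equivalence of $\underbar EG$'s, which preserves the $\underbar EG$-property) that the relators used in the construction are $1$-dimensional, so that the cones contributed to $\bar X^*$ are at most $2$-dimensional. The bound $cd_\rationals(G) \leq \dim(\underbar EG)$ recorded at the opening of the section then gives $cd_\rationals(G) \leq 2$. The substantive obstacles having already been cleared in Lemma~\ref{lem:contfix} --- whose proof exploited malnormality of $\{Stab_{\pi_1 X^*}(Y_i)\}$ via Proposition~\ref{prop:boundedpiece} and the $C(9)$ condition --- the remaining work is essentially routine synthesis; the only minor subtlety worth flagging is the legitimacy of the dimension bound under the ``homotopy equivalent to a graph'' hypothesis rather than a literal $1$-dimensionality assumption.
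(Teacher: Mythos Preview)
Your proof is correct and follows essentially the same route as the paper, which simply cites Corollary~\ref{cor:lowdim}, Lemma~\ref{lem:contfix}, and Corollary~\ref{cor:fix}; you are in fact more thorough, since you also invoke Lemma~\ref{lem:stabs} and Lemma~\ref{lem:finitestab} to secure the $G$-CW-structure and axiom~(1), which the paper leaves implicit. Your flag about the dimension bound is legitimate: the hypothesis is only that each $Y_i$ is \emph{homotopy equivalent} to a graph, so the cones $C(Y_i)$ could a priori be $3$-dimensional and $\dim(\bar X^*)\leq 2$ is not literally guaranteed without either assuming the $Y_i$ are actually $1$-dimensional or passing to a $G$-homotopy-equivalent model---the paper does not address this either, and indeed the proof of Corollary~\ref{cor:lowdim} asserts that $\bar X^*$ has no cells of dimension $\geq 3$ under the same hypothesis.
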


\begin{proof}
This follows from Corollary~\ref{cor:lowdim}, Lemma~\ref{lem:contfix}, and Corollary~\ref{cor:fix}.
\end{proof}

We can go a step further if we assume that $X$ has a finite regular cover where every $Y_i \rightarrow X$ lifts to an embedding, as is observed in~\cite[4.4]{WiseIsraelHierarchy} for $C'(\frac{1}{20})$ cubical presentations.

\begin{lemma}\label{lem:vcd} Let $X^*=\langle X | \{Y_i\}_{i \in I} \rangle$ be a symmetric $C(9)$ cubical presentation where $dim(X)\leq 2$ and each $Y_i$ is homotopy equivalent to a graph. If there exists a finite regular cover $\overbow{X} \rightarrow X$  where  $Y_i \rightarrow X$ lifts to an embedding for each $i \in I$, then $\pi_1X^*$ is virtually torsion-free.
\end{lemma}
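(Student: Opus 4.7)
The plan is to lift the cubical presentation to $\overbow{X}$, apply Theorem~\ref{thm:A} to the lifted presentation to get a torsion-free group, and then exhibit this group as a finite-index subgroup of $\pi_1 X^*$.

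First, I would form the cubical presentation $\overbow{X}^* := \langle \overbow{X} \mid \{\hat Y_{i,j}\} \rangle$, where for each $i \in I$ the $\hat Y_{i,j}$ range over the finitely many lifts of $Y_i \rightarrow X$ to $\overbow{X}$. By hypothesis each such lift is an embedding, so $\hat Y_{i,j}$ is isomorphic to $Y_i$; in particular the dimension and graph-homotopy-type hypotheses of Theorem~\ref{thm:A} transfer from $X^*$ to $\overbow{X}^*$. The $C(9)$ condition transfers because $\widetilde{\overbow{X}} = \widetilde{X}$, so every elevation of $\hat Y_{i,j}$ to $\widetilde{X}$ is an elevation of $Y_i$, and thus pieces and essential paths for $\overbow{X}^*$ coincide with those for $X^*$. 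Minimality of $\overbow{X}^*$ follows from the embedding hypothesis: the $\pi_1\overbow{X}$-translates of a fixed elevation $\widetilde{\hat Y}_{i,j}$ in $\widetilde{X}$ project to the $\pi_1\overbow{X}$-translates of $\hat Y_{i,j}$ in $\overbow{X}$, which are pairwise disjoint or equal since $\hat Y_{i,j}$ is embedded, so $Stab_{\pi_1 \overbow{X}}(\widetilde{\hat Y}_{i,j}) = \pi_1 \hat Y_{i,j}$. Theorem~\ref{thm:A} then yields that $\pi_1 \overbow{X}^*$ is torsion-free.

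It remains to realise $\pi_1 \overbow{X}^*$ as a finite-index subgroup of $\pi_1 X^*$. For this I would check that the natural map $\overbow{X}^* \rightarrow X^*$ extending $\overbow{X} \rightarrow X$ by sending the cone over each $\hat Y_{i,j}$ to the cone over $Y_i$ is a finite cover. Away from cone-vertices this is just the given cover $\overbow{X} \to X$. At a cone-vertex of $X^*$ associated with $Y_i$, the fibre consists of the cone-vertices of the lifts of $Y_i$; the embedding hypothesis ensures each lift maps isomorphically onto $Y_i$, so the attached cones map homeomorphically and the fibre has constant size $[\pi_1 X : \pi_1 \overbow{X}]$. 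Regularity of $\overbow{X} \to X$ further provides a deck action of $\pi_1 X / \pi_1 \overbow{X}$ on $\overbow{X}^*$ that permutes lifts, exhibiting $\overbow{X}^* \to X^*$ as a finite regular cover. Hence $\pi_1 \overbow{X}^*$ is a finite-index normal subgroup of $\pi_1 X^*$, and $\pi_1 X^*$ is virtually torsion-free.

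The main point requiring care is the verification that $\overbow{X}^* \to X^*$ is a covering at cone-vertices; this is exactly where the embedding hypothesis is used. Without it some lift of $Y_i$ would be a nontrivial cover of $Y_i$, so the cone-vertex fibres would have the wrong cardinality, and neither the covering property nor the minimality of $\overbow{X}^*$ would go through. The remaining verifications (dimension, graph hypothesis, transfer of $C(9)$, construction of the deck action) are essentially formal once one has chosen the right lifted presentation.
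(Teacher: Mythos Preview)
Your route is genuinely different from the paper's. The paper never applies Theorem~\ref{thm:A} to a lifted presentation; instead it uses Corollary~\ref{cor:fix} to see that every torsion element of $\pi_1X^*$ is conjugate into some $Stab_{\pi_1X}(\widetilde Y_i)/\pi_1Y_i$, and then observes that the embedding hypothesis forces $Stab_{\pi_1X}(\widetilde Y_i)\cap\pi_1\overbow X=\pi_1Y_i$, so no nontrivial torsion element survives in the image of $\pi_1\overbow X$ in $\pi_1X^*$. Your approach avoids the $\underbar EG$ machinery entirely, which is nice, but there is a gap.

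The claim that $C(9)$ transfers to $\overbow X^*$ is not correct as stated. Since $X^*$ is only assumed symmetric, $Aut_X(Y_i)=Stab_{\pi_1X}(\widetilde Y_i)/\pi_1Y_i$ may be nontrivial. For each nontrivial $\alpha\in Aut_X(Y_i)$ and each lift $\hat\phi_j:Y_i\hookrightarrow\overbow X$, the composite $\hat\phi_j\circ\alpha$ is a \emph{different} lift with the \emph{same} image in $\overbow X$. In your $\overbow X^*$ these are distinct relations, yet they share identical elevations in $\widetilde X$; by Definition~\ref{def:pieces} the intersection of elevations of distinct-index relations is always a cone-piece, so every essential closed path in $\hat Y_{i,j}$ becomes a single piece and the $C(9)$ condition fails outright. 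Theorem~\ref{thm:A} then does not apply. (Your justification of minimality is also a bit garbled---embeddedness does not make distinct translates ``disjoint or equal''---though the conclusion $Stab_{\pi_1\overbow X}(\widetilde Y_i)=\pi_1Y_i$ is correct and follows from $Stab_{\pi_1X}(\widetilde Y_i)\cap\pi_1\overbow X=\pi_1Y_i$.)

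The repair is straightforward: include only one lift per image in $\overbow X$ (equivalently, one per $Aut_X(Y_i)$-orbit of lifts). This reduced presentation is minimal, and now every cone-piece genuinely arises from distinct elevations of $Y_i$'s in $\widetilde X$, so $C(9)$ does transfer and Theorem~\ref{thm:A} applies. Since discarding redundant cones over identical subcomplexes does not change the fundamental group, this torsion-free group equals $\pi_1\overbow X^*$ for your original (all-lifts) space, and your covering argument then finishes the proof.
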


\begin{proof}
Consider the covering $\overbow{X^*} \rightarrow X^*$ induced by $\overbow{X} \rightarrow X$, which is a finite-degree covering because  $\overbow{X} \rightarrow X$ is. If $g \in \pi_1X^* - \{1\}$ satisfies $g^n=1$ for some $n \in \naturals$, then Corollary~\ref{cor:fix} implies that if $\sigma \rightarrow X^*$ is a closed path representing $g$, then $\sigma^n$ is conjugate to a closed path in some $Y_i$. Now since $g$ is non-trivial, $\sigma$ does not lift to a closed path in $Y_i$. Since  $Y_i \rightarrow X$ lifts to an embedding  $Y_i \rightarrow \overbow{X}$, then $\sigma$ cannot lift to a closed path in $\overbow{X}$ either, so $\sigma$ is not closed in  $\overbow{X^*}$, and $g \notin \pi_1\overbow{X^*}$.
\end{proof}

If a group $G$ has finite virtual cohomological dimension, then $vcd(G)\leq dim (\underbar EG)$, since a torsion-free subgroup $G' <G$ of finite index has $cd(G)=vcd(G)$, and $dim (\underbar EG)$ gives an upper bound for the cohomological dimension of any torsion-free subgroup of $G$. In particular:

\begin{corollary}\label{cor:vcd}
Let $X^*=\langle X | \{Y_i\}_{i \in I} \rangle$ be a symmetric $C(9)$ cubical presentation with $G=\pi_1X^*$. If $dim(X)\leq 2$ and each $Y_i$ is homotopy equivalent to a graph, and there exists a finite regular cover $\overbow{X} \rightarrow X$  where  $Y_i \rightarrow X$ lifts to an embedding for each $i \in I$, then $vcd(G)\leq 2$. \qed
\end{corollary}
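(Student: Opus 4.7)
The plan is to combine Lemma~\ref{lem:vcd} with Theorem~\ref{thm:B}, essentially as a direct corollary: Lemma~\ref{lem:vcd} supplies a torsion-free finite-index subgroup $G' < G$, while Theorem~\ref{thm:B} supplies a $2$-dimensional model for $\underbar EG$, namely $\bar X^*$. The numerical bound then comes from forcing $G'$ to act freely on $\bar X^*$ and reading off the dimension of the quotient.

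First I would invoke Lemma~\ref{lem:vcd} to produce a finite-index, torsion-free subgroup $G' \leq G$. Next, I would observe that by Theorem~\ref{thm:B}, the space $\bar X^*$ is an $\underbar EG$ with $\dim(\bar X^*) \leq 2$, and in particular every cell-stabiliser of the $G$-action on $\bar X^*$ is finite (this is Lemma~\ref{lem:finitestab}). Since $G'$ is torsion-free, its intersection with any finite subgroup of $G$ is trivial, so the restricted action of $G'$ on $\bar X^*$ is free. Being free, properly discontinuous, and cellular, it yields a covering map $\bar X^* \to \bar X^*/G'$, and since $\bar X^*$ is contractible by Corollary~\ref{cor:lowdim}, the quotient is a $K(G',1)$.

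From this I would read off $\mathrm{cd}(G') \leq \dim(\bar X^*/G') = \dim(\bar X^*) \leq 2$. By the standard definition of virtual cohomological dimension, $\mathrm{vcd}(G) = \mathrm{cd}(G') \leq 2$, which is the desired inequality. The argument is essentially just unpacking the remark immediately preceding the corollary, which already asserts the general principle $\mathrm{vcd}(G) \leq \dim(\underbar EG)$ for groups with torsion-free finite-index subgroups.

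There is no real obstacle here: all the heavy lifting was done in Corollary~\ref{cor:lowdim}, Lemma~\ref{lem:finitestab}, Theorem~\ref{thm:B}, and Lemma~\ref{lem:vcd}. The only small point worth being careful about is justifying that $G'$ acts \emph{freely} on $\bar X^*$ rather than merely properly, but this is immediate from torsion-freeness together with finiteness of cell-stabilisers, so the corollary truly is a one-line consequence of the preceding results.
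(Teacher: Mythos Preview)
Your proposal is correct and matches the paper's approach exactly: the corollary is marked with a bare \qed\ in the paper, and the paragraph immediately preceding it records the same argument you give (Lemma~\ref{lem:vcd} for virtual torsion-freeness, Theorem~\ref{thm:B} for the $2$-dimensional $\underbar EG$, then freeness of the torsion-free subgroup's action to bound $\mathrm{cd}(G')$). Your write-up is, if anything, more detailed than what the paper provides.
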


\section{Examples}\label{sec:ex}

Using Theorems~\ref{thm:A} and~\ref{thm:B}, we can provide classifying spaces, or classifying spaces for proper actions, in the situations outlined below:

\begin{example}\label{const:covers}(Forcing small-cancellation by taking covers)
If $X$ is a compact non-positively curved cube complex with hyperbolic fundamental group, and $H_1,...,H_k$ are quasiconvex subgroups of $\pi_1X$ that form a malnormal collection, then for each $n>0$ there are finite index subgroups $H'_i \subset H_i$ and local isometries $Y_i \rightarrow X$ with $Y_i$ compact and $\pi_1Y_i=H'_i$ such
that $X^*= \langle X | \{Y_i\} \rangle$ satisfies $C(n)$. How one achieves this condition is explained in~\cite[3.51]{WiseIsraelHierarchy}. To guarantee that $X^*$ is symmetric, it suffices to require that $H_i=C_{\pi_1X}(H_i)$. When $n\geq9$, $dim(X)\leq 2$ and $H_1,...,H_k$ are free, we conclude that $\bar X^*$ is a classifying space for proper actions for $\pi_1X^*$, and in particular that $\pi_1X^*$ has rational cohomological dimension $cd_\rationals (\pi_1X^*) \leq 2$.
\end{example}

\begin{example}[Forcing small-cancellation by creating noise]\label{ex:noise}
Let $X$ be a compact non-positively curved cube complex with non-elementary hyperbolic fundamental group, as above. Then by~\cite[3.2]{Arenas2023}, for all $k \geq 1$ there exist (infinitely many choices of) free non-abelian subgroups $\{H_1, \ldots, H_k\}$, and cyclic subgroups $z_i <H_i$  such that there exist local isometries $Y_i \rightarrow X$ with $Y_i$ compact and $\pi_1Y_i=z_i$, so that the cubical presentation $X^*= \langle X | \{Y_i\} \rangle$ is minimal, and satisfies the $C(9)$ condition. Thus, if $\dim(X)\leq 2$,  then $X^*$ is a classifying space for all such examples.
\end{example}

 We now move away slightly from the hyperbolic setting with a construction that is described in~\cite[5.r]{WiseIsraelHierarchy}.

\begin{example}[Some 2-dimensional Artin groups]\label{ex:artins}
Let $\Gamma$ be a finite simplicial graph, and let 
\begin{equation}\label{eq:artindef} A_\Gamma=\langle v_i\in V(\Gamma) | (v_i,v_j)^{m_{ij}}=(v_i,v_j)^{m_{ij}} \text{ if } i<j \rangle
\end{equation}
be an Artin group on $\Gamma$, where either $m_{ij}$ is an integer $\geq 2$ or $m_{ij}=\infty$,  the notation $(v_i,v_j)^{m_{ij}}$ denotes the first half of the word $(v_iv_j)^{m_{ij}}$, and we adopt the (standard) convention that $(v_iv_j)^\infty$ signifies that there is no relation between $v_i$ and $v_j$. 

Let $B_\Gamma$ denote the bouquet of $|V|$ circles, where each circle is labelled by a $v_i \in V$. Consider the cubical presentation \begin{equation}\label{eq:artin1}
B^*_\Gamma=\langle B_\Gamma | Y_{ij} \rightarrow B_\Gamma \rangle
\end{equation} where, for each $m_{ij} < \infty$, the complex $Y_{ij}$ is the Cayley graph of the 2-generator Artin group $\langle v_i,v_j | (v_i,v_j)^{m_{ij}}=(v_j,v_i)^{m_{ij}} \rangle$ and $Y_{ij} \rightarrow B_\Gamma$ is the obvious covering projection. Then $\pi_1 B^*_\Gamma=A_\Gamma$, so this is a ``non-trivial'' cubical presentation for $A_\Gamma$.

Note that $B^*_\Gamma$ is a minimal cubical presentation, and that the $C(9)$ condition is satisfied provided that $5\leq m_{ij}\leq \infty$ for all $i <j$. 
Indeed, since $B^*_\Gamma$ is a bouquet of circles, and hence 1-dimensional, there are no non-trivial wall-pieces in $B^*_\Gamma$. 
The cone-pieces correspond to intersections between distinct elevations of the $Y_{ij}$'s, and for a pair of relations   $Y_{ij}, Y_{i'j'}$, these intersections are (subpaths of) bi-infinite lines of the form $v^\infty_\ell$ where one of $i,j$ and one of $i'j'$ are equal to $\ell$. Since an essential cycle in a $Y_{ij}$ lifts to a path that intersects $v^\infty_\ell$ in a single edge, and the girth of any such cycle is $\geq 5$, then no essential cycle is the concatenation of $<10$ pieces.

It is a well-established result~\cite{CharneyDavis95} that $2$-dimensional Artin groups are exactly those where no triangle in the defining graph has labels $(2,3,4),(2,2,n),(2,3,3)$ or $(2,3,5)$. This class contains in particular the class of \emph{extra large type Artin groups}, which are precisely those with labels $4\leq m_{ij}\leq \infty$ for all $i <j$.  In light of the discussion above, we can partially recover the result that extra large type Artin groups are 2-dimensional as a corollary of Theorem~\ref{thm:A}:

\begin{theorem}\label{thm:exampleartins2}
Let $A_\Gamma$ be an Artin group on $\Gamma$ given by a presentation~\eqref{eq:artindef}, where  $5\leq m_{ij}\leq \infty$ for all $i <j$. Then $B^*_\Gamma$ is a $K(A_\Gamma,1)$, and in particular, $A_\Gamma$ is torsion-free and $cd(A_\Gamma)\leq 2$. 
\end{theorem}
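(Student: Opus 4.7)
The plan is to obtain Theorem~\ref{thm:exampleartins2} as a direct application of Theorem~\ref{thm:A} to the cubical presentation $B^*_\Gamma$ of~\eqref{eq:artin1}. Thus the proof reduces to checking that each of the four hypotheses of Theorem~\ref{thm:A} is satisfied by $B^*_\Gamma$. Two of them are immediate from the construction: the space $B_\Gamma$ is a bouquet of circles and therefore $\dim(B_\Gamma)=1\leq 2$, and each $Y_{ij}$, being by definition a Cayley graph of a $2$-generator group, is itself a $1$-complex and in particular homotopy equivalent to a graph. The other two -- minimality of $B^*_\Gamma$ and the $C(9)$ small-cancellation condition under the assumption $5\leq m_{ij}\leq\infty$ -- are precisely what has been verified in the paragraphs immediately preceding the theorem statement.

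Having assembled the hypotheses, the next step is to invoke Theorem~\ref{thm:A}, which delivers simultaneously all three claims: that $B^*_\Gamma$ is a $K(A_\Gamma,1)$, that $A_\Gamma=\pi_1 B^*_\Gamma$ is torsion-free, and that $gd(A_\Gamma)\leq 2$. Since $cd(A_\Gamma)\leq gd(A_\Gamma)$ in general, the bound $cd(A_\Gamma)\leq 2$ follows immediately. The bound on $gd$ corresponds to the observation that $\dim(B^*_\Gamma)=2$: the cubical part is $1$-dimensional, and each attached cone over a graph adds a single layer of $2$-cells once the cone-cells are regarded -- per the convention of Section~\ref{ssec:diagcub} -- as single $2$-cells rather than as triangulated discs.

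The hard part of the argument, if any, is not in the proof of Theorem~\ref{thm:exampleartins2} itself but has been front-loaded into the $C(9)$ verification carried out in the preamble of Example~\ref{ex:artins}: the absence of wall-pieces (guaranteed by the $1$-dimensionality of $B_\Gamma$), the identification of cone-pieces with single-letter subpaths of bi-infinite lines $v_\ell^{\pm\infty}$, and the girth bound $2m_{ij}\geq 10$ on essential cycles of $Y_{ij}$. Once these facts are accepted, Theorem~\ref{thm:exampleartins2} is essentially a one-line corollary of Theorem~\ref{thm:A}, with no further cubical small-cancellation analysis required.
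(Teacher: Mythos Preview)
Your proposal is correct and matches the paper's approach exactly: the theorem is stated as a direct corollary of Theorem~\ref{thm:A}, with the hypotheses (minimality, the $C(9)$ condition, $\dim(B_\Gamma)\leq 2$, and each $Y_{ij}$ being a graph) having been verified in the discussion of Example~\ref{ex:artins} immediately preceding the statement. There is no additional argument in the paper beyond what you have outlined.
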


We note that, while the cubical presentation~\eqref{eq:artin1} can be viewed as a graphical presentation, asphericity of $B_\Gamma^*$ cannot be deduced from the asphericity results in~\cite{Gromov2003, Ollivier06, Gruber15}, since these handle only graphical presentations where the relators are cycles, rather than arbitrary --possibly non-compact-- graphs. While we state Theorem~\ref{thm:exampleartins2} for Artin groups whose associated cubical presentations satisfy the cubical $C(9)$ condition, we believe that the result can be recovered for \emph{all}  Artin groups where  $3\leq m_{ij}\leq \infty$ for all $i <j$, since in that case the $C(6)$ condition is satisfied, and in the graphical case this condition should be enough to recover a suitable form of Theorem~\ref{thm:tric}, and thus of Theorem~\ref{thm:A}. We have not, however, explored this  possibility closely. 

It is claimed in~\cite[5.70]{WiseIsraelHierarchy} that the cubical presentation~\eqref{eq:artin1} satisfies the $C(6)$ condition whenever the defining graph has no triangles with labels of the form $(2,3,4),(2,2,n),(2,3,3)$ or $(2,3,5)$, but no justification is presented in that text. Nevertheless, if the claim is true, then all $2$-dimensional Artin groups admit minimal $C(6)$ graphical presentations.
\end{example}

When $X$ is a square complex whose fundamental group is not hyperbolic, general constructions of $C(9)$ cubical presentations are harder to come by, but can be produced by hand in specific situations. For instance, if $X_\Gamma$ is the Salvetti complex of a RAAG whose defining graph $\Gamma$ has no triangles, then $dim(X_\Gamma) \leq 2$, and one can produce a wealth of cubical presentations $X^*_\Gamma=\langle X_\Gamma | \{Y_i\} \rangle$ that satisfy the $C(9)$ condition and are minimal. This is outlined in~\cite[3.s (4)]{WiseIsraelHierarchy}, and will be elaborated upon in forthcoming work~\cite{Arenas2023pi}.

 Thus, Theorems~\ref{thm:A} and~\ref{thm:B} are widely applicable, even when one starts with a non-positively curved cube complex $X$ whose fundamental group is far from being hyperbolic.

\bibliographystyle{alpha}
\bibliography{bib9.bib}
\end{document}